\documentclass[11pt,a4paper,reqno]{amsart}

\usepackage{calc,graphicx,amsfonts,amsthm,amscd,epsfig,psfrag,amsmath,amssymb,enumerate,dsfont}
\usepackage[numeric,initials,nobysame]{amsrefs}

\usepackage[showlabels,sections,floats,textmath,displaymath]{}
\usepackage[colorlinks=true, pdfstartview=FitV, linkcolor=blue, citecolor=blue, urlcolor=blue, pagebackref=false]{hyperref}
  \setlength{\oddsidemargin}{6mm}
  \setlength{\evensidemargin}{6mm}
  \setlength{\textwidth}{145mm}
  \setlength{\headheight}{5mm}
  \setlength{\headsep}{12mm}
  \setlength{\topmargin}{5mm}
  \setlength{\textheight}{210mm}

\reversemarginpar
\newlength\fullwidth
\setlength\fullwidth{\textwidth+2\marginparsep}

\numberwithin{equation}{section}

\DeclareMathSymbol{\leqslant}{\mathalpha}{AMSa}{"36} 
\DeclareMathSymbol{\geqslant}{\mathalpha}{AMSa}{"3E} 
\DeclareMathSymbol{\eset}{\mathalpha}{AMSb}{"3F}     
\renewcommand{\leq}{\;\leqslant\;}                   
\renewcommand{\geq}{\;\geqslant\;}                   

\def\1{\ifmmode {1\hskip -3pt \rm{I}} \else {\hbox {$1\hskip -3pt \rm{I}$}}\fi}


\newtheorem{Theorem}{Theorem}[section]
\newtheorem{MainTheorem}{Theorem}
\newtheorem{Lemma}[Theorem]{Lemma}

\newtheorem{remark}[Theorem]{Remark}

\newtheorem{definition}[Theorem]{Definition}


\newcommand{\cC}{\ensuremath{\mathcal C}}
\newcommand{\cD}{\ensuremath{\mathcal D}}

\newcommand{\cG}{\ensuremath{\mathcal G}}
\newcommand{\cH}{\ensuremath{\mathcal H}}

\newcommand{\cK}{\ensuremath{\mathcal K}}
\newcommand{\cL}{\ensuremath{\mathcal L}}


\newcommand{\bbN}{{\ensuremath{\mathbb N}} }

\newcommand{\bbP}{{\ensuremath{\mathbb P}} }

\newcommand{\bbR}{{\ensuremath{\mathbb R}} }

\newcommand{\bbT}{{\ensuremath{\mathbb T}} }

\newcommand{\bbZ}{{\ensuremath{\mathbb Z}} }

\newcommand{\var}{\operatorname{Var}}

%
%
\let\a=\alpha \let\b=\beta   \let\d=\delta  
 \let\g=\gamma \let\h=\eta      \let\l=\lambda
      \let\o=\omega

\let\D=\Delta   \let\G=\Gamma   
\let\O=\Omega      

%

%

%

%

%

%

%

%
%

\def\\{\hfill\break}

\def\thsp{\thinspace}

\def\tthsp{\kern .083333 em}

\def\?{\mskip -10mu}
%
%

\def\indbox#1{\hbox to \parindent{\hfil\ #1\hfil} }

\newfam\msafam
\newfam\msbfam
\newfam\eufmfam
%
%
%
\def\hexnumber#1{%
  \ifcase#1 0\or 1\or 2\or 3\or 4\or 5\or 6\or 7\or 8\or
  9\or A\or B\or C\or D\or E\or F\fi}
\font\tenmsa=msam10
\font\sevenmsa=msam7
\font\fivemsa=msam5
\textfont\msafam=\tenmsa
\scriptfont\msafam=\sevenmsa
\scriptscriptfont\msafam=\fivemsa
\edef\msafamhexnumber{\hexnumber\msafam}%
%
%
\mathchardef\restriction"1\msafamhexnumber16
\mathchardef\ssim"0218
\mathchardef\square"0\msafamhexnumber03
\mathchardef\eqd"3\msafamhexnumber2C
\def\QED{\ifhmode\unskip\nobreak\fi\quad
  \ifmmode\square\else$\square$\fi}
\font\tenmsb=msbm10
\font\sevenmsb=msbm7
\font\fivemsb=msbm5
\textfont\msbfam=\tenmsb
\scriptfont\msbfam=\sevenmsb
\scriptscriptfont\msbfam=\fivemsb

\font\teneufm=eufm10
\font\seveneufm=eufm7
\font\fiveeufm=eufm5
\textfont\eufmfam=\teneufm
\scriptfont\eufmfam=\seveneufm
\scriptscriptfont\eufmfam=\fiveeufm

\def\({\left(}
\def\){\right)}
%
%
%
%
\let\neper=e
\let\ii=i

\def\ie{\hbox{\it i.e.\ }}
\let\id=\identity

\let\sset=\subset

\def\nep#1{ \neper^{#1}}

\def\tc{\thsp | \thsp}
\let\<=\langle
\let\>=\rangle

\def\Var{ \mathop{\rm Var}\nolimits }

\def\gap{\mathop{\rm gap}\nolimits}

\outer\def\nproclaim#1 [#2]#3. #4\par{\medbreak \noindent
   \talato(#2){\bf #1 \Thm[#2]#3.\enspace }%
   {\sl #4\par }\ifdim \lastskip <\medskipamount
   \removelastskip \penalty 55\medskip \fi}
\def\thmm[#1]{#1}
\def\teo[#1]{#1}
%
%
\def\sttilde#1{%
\dimen2=\fontdimen5\textfont0
\setbox0=\hbox{$\mathchar"7E$}
\setbox1=\hbox{$\scriptstyle #1$}
\dimen0=\wd0
\dimen1=\wd1
\advance\dimen1 by -\dimen0
\divide\dimen1 by 2
\vbox{\offinterlineskip%
   \moveright\dimen1 \box0 \kern - \dimen2\box1}
}
%

%
%
%


\begin{document}

\title[Mixing time of a kinetically constrained
spin model on trees at criticality]{Mixing time of a kinetically constrained
spin model on trees: power law scaling at criticality}
\author[N. Cancrini]{N. Cancrini}
\email{nicoletta.cancrini@roma1.infn.it}
\address{DIIIE Univ. L'Aquila, 1-67100 L'Aquila, Italy}

\author[F. Martinelli]{F. Martinelli}
\email{martin@mat.uniroma3.it}
\address{Dip. Matematica, Univ. Roma Tre, Largo S.L.Murialdo 00146, Roma, Italy}

\author[C. Roberto]{C. Roberto}
\email{cyril.roberto@math.cnrs.fr}
\address{Universit\'e Paris Ouest Nanterre La D\'efense - Modal'X, 200 avenue de la R\'epublique 92000 Nanterre, France}

\author[C. Toninelli]{C. Toninelli}
\email{Cristina.Toninelli@lpt.ens.fr}
\address{Laboratoire de Probabilit\'es et Mod\`eles Al\`eatoires
  CNRS-UMR 7599 Universit\'es Paris VI-VII 4, Place Jussieu F-75252 Paris Cedex 05 France}

\thanks{Work supported by  the European Research Council through the
  ``Advanced Grant''  PTRELSS 228032.
The first author was also partially supported by PRIN 2009
  protocollo n.2009TA2595.02 and the fourth author by the French
Ministry of Education through the ANR BLAN07-2184264}

\begin{abstract}
On the rooted $k$-ary tree we consider a $0$-$1$ kinetically constrained spin
model in which the occupancy variable at each node is re-sampled with
rate one from
the Bernoulli(p) measure iff all its children are empty. For this
process the following picture was conjectured to hold. As long as
$p$ is below the percolation threshold $p_c=1/k$ the process is
ergodic with a finite relaxation time while, for $p>p_c$, the process
on the infinite tree is no longer ergodic and 
the relaxation time on a finite
regular sub-tree
becomes exponentially large in the depth of the tree. At the critical
point $p=p_c$ the process on the infinite tree is still ergodic but with
an infinite relaxation time. Moreover, on finite sub-trees, the relaxation
time grows polynomially in the depth of the tree. 

The conjecture was recently proved by the second and forth author
except at criticality. 
Here we analyse  the critical and quasi-critical case and prove for
the relevant time scales: (i)
power law behaviour in the depth of
the tree at $p=p_c$ and (ii) power law scaling in
$(p_c-p)^{-1}$ when $p$ approaches $p_c$ from below. Our results,
which are very close to those obtained recently for the Ising model at
the spin glass critical point, 
represent the first rigorous analysis of a kinetically constrained model
at criticality.
\end{abstract}

\maketitle

\thispagestyle{empty}

\section{Introduction}
On the state space $\{0,1\}^{\bbT^k}$, where $\bbT^k$ is the regular
rooted tree 
with $k\ge 2$ children for each node, we consider a constrained spin
model in which each spin, with rate one and iff all its
children are zero, chooses a new value in
$\{0,1\}$ with probability $1-p$ and $p$ respectively. 
This model belongs to the class of \emph{kinetically constrained spin models} which have been introduced in physics
literature to model liquid/glass transition or, more
generally, glassy dynamics (see
\cites{Ritort-Sollich,GarrahanSollichToninelli} for physical background
and \cite{CMRT} for related mathematical work). 
As for most of
the kinetically constrained models, the Bernoulli(p) product measure
$\mu$ is a reversible measure for the process. 

When $k=1$ the model
coincides with the well known East model \cite{JACKLE} (see also
\cites{AD,CMRT,CMST,FMRT,FMRT3} for rigorous analysis). As soon as
$k\ge 2$, the model shares some of the key features of another well known
kinetically constrained system, namely the North East model \cite{LK,CMRT}. More
specifically, since above the critical density $p_c=1/k$ the occupied
vertices begin to percolate (under the reversible measure $\mu$),
blocked clusters appear and time ergodicity is lost. It is
therefore particularly interesting to study the relaxation to
equilibrium in \textit{e.g.}\ finite sub-trees of $\bbT^k$, when the density $p$
is below, equal or above the critical density $p_c=1/k$.    

In
\cite{MT} it was recently proved that, as long as $p<p_c$,  the process on the infinite
tree is exponentially
ergodic with a finite relaxation time
$T_{\rm rel}$. Under the same assumption, on a finite tree with suitable boundary conditions on the leaves
the mixing time was also shown to be linear in the depth of the tree. When instead $p>p_c$
the ergodicity on the infinite tree is lost and both the
relaxation and the mixing
times for finite trees diverge exponentially fast in the depth of the tree.

In this paper we tackle for the first time the critical case $p=p_c$.
Our main results, answering a question of Aldous-Diaconis \cite{AD},
can be formulated as follows. 
\begin{itemize}
  \item \emph{Critical case}. Assume $p=p_c$ and let $\bbT$ be a
 finite $k$-ary rooted tree of depth\footnote{We use here the convention that the depth is the 
  graph distance between the root and the leaves.}
$L$. Denote by $T_{\rm rel}(\bbT)$ and $ T_{\rm mix}(\bbT)$ the
relaxation time
of the process on $\bbT$ with no constraints for the spins
at the leaves (cf definitions  \ref{def:gap} and
\ref{def:mixing}). Then (cf Theorem \ref{th:j=k}) $T_{\rm rel}=\O(L^2)$ and $T_{\rm
  rel}=O(L^{2+\b})$ for some $0\le \b <\infty$. 
\item \emph{Quasi-critical case}. Assume $p=p_c-\epsilon$,
  $0<\epsilon\ll 1$,  and let $T_{\rm rel}$ be the relaxation
  time for the process on the infinite tree $\bbT^k$. Then (cf Theorem
\ref{th:2}) $T_{\rm rel}=\O(\epsilon^{-2})$ and $T_{\rm
  rel}=O\bigl(\epsilon^{-2-\alpha}\bigr)$ for some $\alpha\geq 0$.
 \item \emph{Mixing time.} We basically show (cf Theorem \ref{lemmamixing})
   that the mixing time on a finite $k$-ary rooted tree of depth $L$
   behaves like $L\times T_{\rm rel}$.
\end{itemize}
Our results, which are identical to those proved for
the Ising model on trees at the spin glass critical point \cite{Ding}, 
represent the first rigorous analysis of a kinetically constrained model
at criticality. As shown in \cite{MT}, our approach has a good chance
to apply also to other models with an ergodicity
phase transition, notably the North-East model on $\bbZ^2$ for which the
critical density
$p_c$ coincides with the oriented percolation threshold \cite{CMRT}. .     




\subsection{Model, notation and background}

\subsubsection*{The graph.}
The model we consider is defined on the infinite rooted
$k$-ary tree $\bbT^k$ with root $r$ and vertex set $V$. For each $x\in
V$,  $\mathcal K_x$ will denote the set of its
$k$ children and $d_x$ its \emph{depth}, \ie  the graph distance
between $x$ and the root $r$. The finite $k$-ary subtree of $\bbT^k$
with $n$
levels is the set $\bbT^k_n=\{x\in \bbT^k:\ d_x\le n\}$.  
For $x\in \bbT^k_n$, $\bbT^k_{x,n}$ will
denote the $k$-ary sub-tree of $\bbT_n^k$ rooted at $x$ with depth
$n-d_x$, where $d_x$ is the depth of $x$. In other words the leaves of
$\bbT^k_{x,n}$ are a subset of the leaves of $\bbT_n^k$. We also set $\hat\bbT^k_{x,n}=\bbT^k_{x,n}\setminus \{x\}$
(See Figure 1 below). In the sequel, whenever no
confusion arises, we will drop the superscripts $k,n$ from $\bbT^k_n$
and $\bbT_{x,n}^k$.

\subsubsection*{The configuration spaces.} We choose as configuration space the set $\Omega=\{0,1\}^V$ whose
elements will usually be assigned Greek letters. We will often write $\h_x$ for the value
at $x$ of the element $\h\in \O$. We will also write
$\O_A$ for the set $\{0,1\}^A$, $A\subseteq V$.  With a slight abuse of notation, for
any $A\subseteq V$ and any $\eta,\o\in \O$, we let $\eta_A$  be
the restriction of $\eta$ to the set $A$ and $\eta_A\cdot\omega_{A^c}$  be the
configuration which equals $\eta$ on $A$ and $\omega$ on $V\setminus A$.

\subsubsection*{Probability measures.}
For any $A\subseteq V$ we denote
by~$\mu_A$ the product measure $\otimes_{x\in A}\,\mu_x$ where each
factor $\mu_x$ is the Bernoulli measure on $\{0,1\}$ with $\mu_x(1)=p$
and $\mu_x(0)=q$ with $q=1-p$. If $A=V$ we abbreviate $\mu_V$ to
$\mu$. 
Also, with a slight abuse of
notation, for any finite $A\subset V$, we will write $\mu(\eta_A)=\mu_A(\eta_A)$.

\subsubsection*{Conditional expectations and conditional variances.}
Given $A\sset V$ and a function $f \colon \Omega\to\bbR$ depending on finitely many variables,
in the sequel referred to as \emph{local function}, we define the
function $\eta_{A^c} \mapsto \mu_A(f)(\eta_{A^c})$ by the formula: 
\[
\mu_A(f)(\eta_{A^c}):= \sum_{\sigma\in\Omega_A}
\mu_A(\sigma)f(\sigma_A\cdot\eta_{A^c}).
\]
Clearly $\mu_A(f)$ coincides with the
\emph{conditional expectation} of $f$ given the configuration outside $A$.
Similarly we write $\Var_A(f)=\mu_A(f^2)-\mu_A(f)^2$ for
the {\it conditional variance} of~$f$ given $\eta_{A^c}$.  Note that $\Var_A(f)=0$
iff $f$ does not depend on the configuration inside~$A$. When $A=V$, respectively $A=\{x\}$ for some $x \in V$, we
abbreviate $\Var_V(f)$ to $\Var(f)$, respectively $\Var_{\{x\}}(f)$ to $\Var_x(f)$. 
\begin{definition}[OFA-kf model]\label{defFA}
The {\rm OFA-kf} (Oriented
  Fredrickson-Andersen k-facilitated)  model at density $p$ is a continuous time Glauber type Markov
processe on $\O$, reversible w.r.t.\ $\mu$, 
with Markov semigroup $P_t=\nep{t\cL}$ whose infinitesimal generator $\cL$ acts
on local functions $f:\O\mapsto \mathbb R$ as follows: 
\begin{align}
\cL f(\omega)&=\sum_{x\in \bbT^k}c_{x}(\o)\left[\mu_x(f)(\o)-f(\o)\right] .
\label{FAjf}
\end{align}
The function $c_x$, in the sequel referred to as the
\emph{constraint at $x$}, is defined by
\begin{align}
\label{cx}
   c_{x}(\o)&=
    \begin{cases}
   1 & \text{if  $\o_y=0\ \forall y\in \mathcal K_x$}\\
  0 & \text{otherwise}.
    \end{cases}
  \end{align}
\end{definition}
It is easy to check by standard methods (see \textit{e.g.}\ \cite{Liggett}) that
the process is well defined and that its generator can be extended to
non-positive self-adjoint operators on $L^2(\bbT^k,\mu)$.

The OFA-kf process can of course be defined also on finite rooted trees. In this case and in order to ensure irreducibility
of the Markov chain the constraints $c_x$ must be suitably modified.

\begin{definition}[Finite volume dynamics]\label{finite}
Let $\bbT$ be a finite subtree of $\bbT^k$ and let, for any $\eta\in \O_\bbT$, $\eta^{0}\in \O$ denote the
extension of $\eta$ in $\O$  given 
by
\begin{equation*}
\eta^0_x=   \begin{cases}
 \eta_x &\text{if $x\in \bbT$} \\
0 &\text{if $x\in \bbT^k\setminus \bbT$}.   
  \end{cases}
\end{equation*}
For any $x\in \bbT$ define the \emph{finite constraints} $c_{\bbT,x}$ by
\begin{equation} \label{finitecx}
c_{\bbT,x}(\eta)= c_x(\eta^0).
\end{equation}
We will then consider the irreducible, continuous time
Markov chains on $\O_\bbT$ with generator 
\begin{align}
\label{fin-gen2}
\cL_\bbT f=\sum_{x\in \bbT}c_{\bbT,x}[\mu_x(f)-f] \qquad \eta
\in \O_\bbT. \end{align} 
\end{definition}
Note that irreducibility of the above defined finite volume dynamics
is guaranteed by the fact that starting from the empty leaves one can empty any configuration via allowed spin flips.
It is natural to define (see \cite{CMRT}) the critical density for
the model by:
\begin{align} 
p_c =\sup\{p\in[0,1]:\text{0 is simple eigenvalue of } \cL\}
\end{align}
The regime $p<p_c$ is called the {\sl ergodic region} and we say that an {\sl ergodicity breaking transition} occurs at the
critical density. In \cite{MT} it has been established that $p_c$
coincides with the percolation threshold $1/k$
and that for all $p<p_c$ the value $0$ is a simple eigenvalue of the
generator $\cL$. Actually much more is known but first we need to introduce
some relevant time scales.
 
\begin{definition}[The relaxation time] \label{def:gap}
Let $\cD(f) :=\mu(f,-\cL f)$ be the Dirichlet form corresponding to the
generator $\cL$. We define the spectral
 gap of the process as
\begin{equation}
  \label{eq:gap}
\gap(\cL):=\inf_{\substack{f\in {\rm Dom}(\cL)\\ f\neq \text{\rm const}}}\frac{\cD(f)}{\Var(f)}
\end{equation}
We also define the \emph{relaxation time} by $T_{\rm rel}:=
\gap(\cL)^{-1}$. Similarly, if $\bbT$ is a finite rooted tree,
we define  $T_{\rm rel}(\bbT):= \gap(\cL_\bbT)^{-1}$.
\end{definition}

\begin{definition}[Mixing times]
\label{def:mixing}
Let $\bbT$ be a finite rooted sub-tree of $\bbT^k$. For any $\eta\in \O_\bbT$  we denote by $\nu_t^\eta$ 
the law at time $t$ of the Markov chain with
generator $\cL_{\bbT}$ and by $h_t^\eta$ its relative density w.r.t.\ $\mu_\bbT$. Following \cite{Gine}, we define the family of mixing times $\{T_a(\bbT)\}_{a\ge 1}$ by
\[
T_a(\bbT):= \inf\left\{t\ge 0:\ \max_\eta\mu_\bbT\left(|h_t^\eta -1|^a\right)^{1/a}\le 1/4\right\}.
\]
Notice that $T_1(\bbT)$ coincides with the usual \emph{mixing time} $T_{\rm mix}(\bbT)$ of the chain  (see
\textit{e.g.} \cite{Peresbook}) and that, for any $a\ge 1$, $T_1\le T_a$. 

\end{definition}
With the above notation it was proved in \cite{MT} that
\begin{enumerate}[(i)]
\item for all
$p<p_c$, $T_{\rm rel}<+\infty$ and that the mixing time on a finite
regular $k$-ary sub-tree of depth $L$ grows linearly in $L$;
\item if $p>p_c$, then both the relaxation time and the mixing time on a finite
regular $k$-ary sub-tree of depth $L$ grow exponentially fast in $L$.  
\end{enumerate}
\subsection{Main Results}\ 
\\
Our first contribution concerns the critical case $p=p_c$.
\begin{MainTheorem}\label{th:j=k} 
Fix $k\geq 2$ and assume $p=p_c$. 
Then there exist constants $c>0$ and $\b\ge 0$, with $\beta$ independent of $k$, such that for
each $L$
\begin{align*}
  c^{-1} L^2\le &T_{\rm rel}\bigl(\bbT_L^k\bigr)\le cL^{2+\beta}. \\
\end{align*}
\end{MainTheorem}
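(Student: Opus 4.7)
The approach combines a variational argument for the lower bound with a multi-scale recursion for the upper bound, both exploiting the critical branching structure at $p=p_c=1/k$.

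\emph{Lower bound.} The plan is to exhibit a test function $f$ with Poincar\'e ratio $\cD(f)/\Var(f)\lesssim 1/L^2$. Let $\cC$ be the connected cluster of occupied sites containing the root; at $p=p_c$ the law of $\cC$ under $\mu$ is that of a critical Galton--Watson tree with Binomial$(k,1/k)$ offspring. Set $A_L=\{\mathrm{depth}(\cC)\ge L/2\}$ and $f=\mathbf{1}_{A_L}$. Classical critical branching estimates yield $\mu(A_L)\asymp 1/L$ and $\Var(f)\asymp 1/L$. For $\cD(f)=\sum_x\mu(c_x\Var_x(f))$ the key observation is: if a vertex $x$ at depth $d_x<L/2$ is pivotal for $A_L$, then $x$ lies on an occupied path from the root down to depth $L/2$, so $x$ has an occupied child and therefore $c_x=0$ on the relevant configurations. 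Only pivotals near the ``tip'' of the cluster contribute; a computation using the law of the conditioned critical tree gives $\cD(f)\lesssim 1/L^3$, hence $T_{\rm rel}(\bbT_L^k)\gtrsim L^2$.

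\emph{Upper bound.} Set $T_n:=T_{\rm rel}(\bbT_n^k)$. The plan is a two-block Poincar\'e recursion, splitting $\bbT_n^k$ into the root $\{r\}$ and its complement $B_2=\bbT_n^k\setminus\{r\}$, which is the disjoint union of $k$ subtrees isomorphic to $\bbT_{n-1}^k$. Starting from
\begin{align*}
\Var(f) = \mu\bigl(\Var_r(f)\bigr) + \Var_{B_2}\bigl(\mu_r(f)\bigr),
\end{align*}
the second term is controlled by tensorization: the $k$ subtrees are $\mu_{B_2}$-independent and each carries the OFA-kf dynamics on $\bbT_{n-1}^k$ with relaxation time $T_{n-1}$. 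Combining this with the pointwise convexity bound $c_x\Var_x(\mu_r f)\le \mu_r(c_x\Var_x(f))$ yields $\Var_{B_2}(\mu_r(f))\le T_{n-1}\bigl(\cD(f)-\cD_r(f)\bigr)$, where $\cD_r(f)=\mu(c_r\Var_r(f))$. For the first term, writing $\mu(\Var_r(f))=\cD_r(f)+\mu\bigl((1-c_r)\Var_r(f)\bigr)$, the goal is a Poincar\'e-type inequality
\begin{align*}
\mu\bigl((1-c_r)\Var_r(f)\bigr) \le \psi(n)\bigl(\cD(f)-\cD_r(f)\bigr).
\end{align*}
These estimates combine into the additive recursion $T_n\le 1+T_{n-1}+\psi(n)$, so that if $\psi(n)\le cn^{1+\b}$, iterating from $T_0=O(1)$ produces $T_L\le cL^{2+\b}$.

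\emph{Main obstacle.} The quantity $\psi(n)$ is a ``conditional relaxation time'' measuring the cost, on the event $\{c_r=0\}$ of probability $1-q_c^k$, of creating via subtree moves the empty-children configuration $\{\eta_{y_1}=\cdots=\eta_{y_k}=0\}$ needed to unblock the root. In the subcritical regime of \cite{MT} one has $\psi(n)=O(1)$, since the subtrees empty rapidly; at $p_c$ the blocked clusters have the law of critical Galton--Watson trees and $\psi(n)$ genuinely diverges. The central technical step is to show $\psi(n)\le cn^{1+\b}$ for some $\b\ge 0$ independent of $k$. I would attack this via a coupling between the unblocking dynamics in each subtree and a critical branching genealogy, then use polynomial hitting-time estimates at criticality to control the time needed to drive all $k$ children to zero simultaneously. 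The requirement that $\b$ be independent of $k$ is the delicate point, and forces the coupling estimates to be expressed purely in terms of universal branching-process quantities rather than in terms of $k$-dependent subcritical rates. Once this bound on $\psi(n)$ is established, the recursion closes and produces the announced polynomial upper bound.
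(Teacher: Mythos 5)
Your proposal departs from the paper's argument in both directions, and as written each half has a gap. For the \emph{lower bound}, the indicator $f=\mathbf{1}_{A_L}$ with $A_L=\{\mathrm{depth}(\cC_r)\ge L/2\}$ gives only $T_{\rm rel}\gtrsim L$, not $L^2$. You correctly observe that a pivotal vertex below level $L/2$ must have an occupied child and hence $c_x=0$, but the contribution of pivotal vertices \emph{at} level $L/2$ is $\cD(f)\asymp L^{-2}$, not $L^{-3}$. A vertex $x$ at depth $L/2$ is pivotal precisely when its parent lies in $\cC_r$ and the cluster (with $\eta_x=0$) dies at level $L/2-1$; writing $Z_n$ for the generation sizes, the expected number of such $x$ is of order $\sum_{m\ge1} m\,P(Z_{L/2-1}=m)\,(q^k)^m$, and since $P(Z_n=m)\asymp n^{-2}$ for fixed $m$ at criticality while the factor $(q^k)^m$ truncates the sum geometrically, this is $\asymp L^{-2}$. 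With $\Var(f)\asymp L^{-1}$ the Poincar\'e quotient is only $\asymp L$. The paper's test function $N_r$ (the cluster \emph{size}) fixes this: on the event $\{c_x=1\}$ the subtree below $x$ is detached so $c_x\Var_x(N_r)=c_x\,p(1-p)\mathbf{1}_{\mathrm{par}(x)\in\cC_r}$, whence $\cD(N_r)\lesssim\mu(N_r)\asymp L$, while the recursion \eqref{varf} gives $\Var(N_r)\asymp L^3$, hence $T_{\rm rel}\gtrsim L^2$.

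For the \emph{upper bound}, the two-block split and the tensorization bound $\Var_{B_2}(\mu_r f)\le T_{n-1}\bigl(\cD(f)-\cD_r(f)\bigr)$ are fine, but the proposed inequality $\mu\bigl((1-c_r)\Var_r f\bigr)\le\psi(n)\bigl(\cD(f)-\cD_r(f)\bigr)$ is false: take $f=\eta_r$, for which the right side vanishes while the left side equals $p(1-p)(1-q^k)>0$. Putting $\psi(n)\cD(f)$ on the right makes the inequality consistent, but then the needed estimate $\psi(n)=O(n^{1+\b})$ is the theorem itself shifted by one power, and appealing to ``coupling with a critical branching genealogy and polynomial hitting-time estimates'' names the difficulty rather than resolving it. The paper attacks precisely this crux by a genuinely different route: it introduces long-range constraints $c_x^{(\ell)}$ with $\ell=(1-\delta)L$, whose failure probability at criticality is $\asymp 1/\ell$ by Lemma~\ref{lemma:A}(i); proves the Poincar\'e inequality \eqref{PI} with these constraints; decomposes $c_x^{(\ell)}=c_x+\sum_i\chi_i$ across geometric scales $h_i=\alpha^i$; and uses the ``lowest line of zeros'' $\Gamma$ to decouple the constraint at scale $h_i$ from the Dirichlet form in the part of the tree above $\Gamma$. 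This yields the square-of-a-sum recursion $T_{\rm rel}(\bbT)\le\lambda\bigl[2+C\bigl(\sum_i\sqrt{T_i}\bigr)^2\bigr]$, which closes upon taking square roots and requires no a priori control of an auxiliary quantity like your $\psi(n)$.
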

\begin{remark}
The above result implies, in particular, that the relaxation time for
the critical process on
the infinite tree $\bbT^k$ is infinite. However the process is still
ergodic in the sense that $0$ is a simple eigenvalue of the
 generator $\cL$. This can be proven following the same lines of
 \cite{CMRT}*{Proposition 2.5} by using the key ingredient that, at
 $p=p_c$, there is no infinite percolation of occupied vertices a.s..
\end{remark}
Our second main result deals with the quasi-critical regime,
$p=p_c-\epsilon$ with $0<\epsilon\ll 1$, on the infinite tree $\bbT^k$.
\begin{MainTheorem} \label{th:2}
Fix $k\geq 2$ and assume $p<p_c$.
 Then there exist constants $a>0$ and $\a\ge 0$, with $\alpha$ independent of $k$, such that
\begin{align*}
  a^{-1} (p_c-p)^{-2}\leq&T_{\rm rel}\leq a(p_c-p)^{-(2+\a)} \\
\end{align*}
\end{MainTheorem}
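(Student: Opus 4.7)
The strategy is to exploit the interpretation of $L_\epsilon:=\lceil (p_c-p)^{-1}\rceil$ as the correlation length of subcritical tree percolation at density $p=p_c-\epsilon$. Under this picture, the relaxation time on the infinite tree $\bbT^k$ should be controlled by the relaxation time on a finite sub-tree of depth $L_\epsilon$ at criticality, to which Theorem~\ref{th:j=k} applies with power $L_\epsilon^{2+\beta}$. The matching of exponents in Theorems~\ref{th:j=k} and~\ref{th:2} suggests exactly such a correspondence.

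\textbf{Upper bound.} The first task is a monotonicity/censoring check: show that $T_{\rm rel}(\bbT_L^k)\le cL^{2+\beta}$ holds uniformly for $p$ in a neighborhood below $p_c$, not only at $p=p_c$, which should follow from the observation that decreasing $p$ can only facilitate constraints. Next, I would decompose $\bbT^k$ into nested sub-trees of depth $L_\epsilon$ (rooted at levels $0,L_\epsilon,2L_\epsilon,\ldots$) and apply a Poincar\'e-type inequality, bounding the variance on $\bbT^k$ by a sum of variances on the blocks plus a residual error. The error is controlled by subcritical percolation: the probability that a 1-cluster emanating from a vertex reaches depth $L_\epsilon$ is at most $e^{-cL_\epsilon(p_c-p)}$, a small constant by our choice of $L_\epsilon$. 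Applying Theorem~\ref{th:j=k} on each block then yields $T_{\rm rel}\le c'L_\epsilon^{2+\beta}=O\bigl((p_c-p)^{-(2+\beta)}\bigr)$, proving the upper bound with $\alpha=\beta$.

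\textbf{Lower bound.} I would construct a test function capturing the slow mode at the correlation length. A natural candidate is a function $f$ measuring survival of the root's 1-cluster to depth $L_\epsilon$ --- either an indicator of this event or a weighted variant. Near-critical Yaglom-type asymptotics for Galton-Watson trees give $\Var(f)$ of order $p_c-p$. The key observation for the Dirichlet form $\cD(f)=\sum_x\mu(c_x \Var_x f)$ is that flipping a vertex $x$ on the cluster's spine requires all its children to be $0$, whereas one of them is precisely the next spine vertex; hence the chain can modify $f$ only by first flipping the deepest vertex of the spine, and only that vertex carries a nonvanishing $\mu(c_x)=q^k$ at the outset. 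A level-by-level computation of this ``bottom-up'' contribution, combined with the survival-probability scaling, is expected to yield $\cD(f)=O\bigl((p_c-p)^3\bigr)$, whence $T_{\rm rel}\ge \Var(f)/\cD(f)\ge c(p_c-p)^{-2}$.

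\textbf{Main obstacle.} The technical heart is the upper bound: lifting a finite-volume estimate to the infinite subcritical tree is delicate in kinetically constrained settings since a single flip modifies several neighboring constraints, preventing a clean product decomposition. The plan would be a multi-scale iteration doubling the scale at each step up to $L_\epsilon$, using the smallness of subcritical cluster sizes to keep losses bounded. A secondary but non-negligible difficulty in the lower bound is justifying rigorously the bottom-up structure of the dynamics on the cluster's spine, and thereby the level-by-level estimate of $\cD(f)$.
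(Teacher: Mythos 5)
Your proposal captures the right overall picture (introduce a correlation length $L_\epsilon\sim(p_c-p)^{-1}$, reuse the critical-case estimate at that scale, and obtain the lower bound from a test function built on the occupied cluster of the root), which is indeed the structure of the paper's argument. But several of the concrete steps as you describe them would not go through, and the paper handles each of them differently.

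\emph{Upper bound.} You invoke ``decreasing $p$ can only facilitate constraints'' to conclude that the critical estimate $T_{\rm rel}(\bbT_L^k)\le cL^{2+\beta}$ persists for $p<p_c$. That monotonicity of $T_{\rm rel}$ in $p$ is \emph{not} a trivial censoring fact: lowering $p$ changes both the constraint probabilities and the equilibrium measure in the Rayleigh quotient, and there is no general coupling that makes the spectral gap monotone in the density for these models. The paper instead re-examines the proof of the critical upper bound and observes that it uses \emph{only upper bounds} on the bootstrap probabilities $p_n=\mu(E_r^{(n)})$, and those are monotone in $p$ (Lemma \ref{lemma:A}); hence the same chain of inequalities holds verbatim for any $p\le p_c$. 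This is the clean rigorous substitute for your shortcut. Next, your choice $L_\epsilon=\lceil(p_c-p)^{-1}\rceil$ is slightly too small. The error term you need to control is not a single bootstrap probability but roughly (bootstrap probability)$\times$(number of scales/vertices affected), which is of order $L_\epsilon\cdot(1-\epsilon k)^{L_\epsilon}$; with $L_\epsilon=1/\epsilon$ this is $\sim\epsilon^{-1}e^{-k}\to\infty$. The paper takes $\ell=-2\log(\epsilon k)/(\epsilon k)$, whose extra logarithm makes $(\ell+1)(1-\epsilon k)^\ell$ small and lets the analogue of \eqref{PI} close with $\lambda$ bounded. Finally, the paper does not perform a block decomposition into disjoint sub-trees of depth $L_\epsilon$; it reuses the long-range constraint machinery $c^{(\ell)}_x$ from Section \ref{section:critical} in one step: inequality \eqref{neweq} (the analogue of \eqref{PI}), followed by a single finite-tree Poincar\'e on each $\bbT_x$ of depth $\ell$, producing $\Var(f)\le 4\ell\,T_{\rm rel}(\bbT^k_\ell)\,\cD(f)$. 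The ``multi-scale iteration'' you flag as the main obstacle is simply not needed once this is in place.

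\emph{Lower bound.} Here there is a genuine gap. If $f$ is the \emph{indicator} that the root's cluster reaches depth $L_\epsilon$, then both $\Var(f)$ and $\cD(f)$ are of order $p_c-p$, and the ratio is $O(1)$, which is useless. The reason: $\Var_x f\ne 0$ forces $x$ to be pivotal, and conditioned on survival the number of pivotal leaves at depth $L_\epsilon$ is $O(1)$, so $\sum_x\mu(c_x\Var_x f)\sim\mu(\text{survival})\sim\epsilon$, not $\epsilon^3$. Your claimed $\cD(f)=O(\epsilon^3)$ is unsupported and, for the plain indicator, false. The paper instead takes $N_r$, the \emph{size} of the occupied cluster of the root, on a finite tree of depth $\ell=\lfloor1/\epsilon\rfloor$. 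For this choice the Dirichlet form is transparent: $\Var_x N_r$ is nonzero only when $x$ is a leaf of the cluster (otherwise $c_x=0$), so $\cD(N_r)\le c\,\mu(N_r)\sim\ell$, while a two-line recursion gives $\Var(N_r)\gtrsim\ell^3$. Their ratio is $\ell^2\sim\epsilon^{-2}$, as required. Your ``weighted variant'' escape hatch points in the right direction, but without specifying the weight (the right one being the cluster size, not the survival indicator) the Dirichlet form estimate does not follow.

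In short: the skeleton of your plan is the correct one, but (a) the monotonicity-in-$p$ shortcut must be replaced by re-running the critical proof, (b) the scale must carry an extra logarithm, (c) no block/multi-scale machinery is needed given the $c^{(\ell)}_x$ technology, and (d) the indicator test function does not give the claimed Dirichlet form and must be replaced by $N_r$.
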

The last result derives some consequences of the above theorems for the
mixing time on a finite sub-tree.

\begin{MainTheorem}\label{lemmamixing}
There exists $c>0$ such that, for all $L$,
 \begin{equation}
   \label{eq:4}
\frac 1c L\, T_{\rm rel}\bigl(\bbT^k_{\lfloor L/2\rfloor}\bigr)\le T_1(\bbT_L^k)\le T_2(\bbT_L^k) \le
cL\,T_{\rm rel}(\bbT_L^k).
 \end{equation}
\noindent
In particular:
\begin{enumerate}[(i)]
\item if $p=p_c\,$, then 
\[
c^{-1}L^3 \le T_1(\bbT_L^k)\le cL^{3+\beta}.
\] 
\item
If $p<p_c\,$,
\[
\frac 1c (p_c-p)^{-2} L \le T_1(\bbT_L^k) \le c L (p_c-p)^{-(2+\alpha)}
\]  
\end{enumerate}
for some constants $\a, \b\ge 0$ independent of $L$.
\end{MainTheorem}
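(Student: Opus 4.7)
\medskip\noindent\emph{Proof plan for Theorem \ref{lemmamixing}.}
The inequality $T_1(\bbT_L^k)\le T_2(\bbT_L^k)$ is immediate from $\|h-1\|_1\le\|h-1\|_2$, so only the outer two inequalities in \eqref{eq:4} require work; the consequences (i)--(ii) then follow by plugging Theorems \ref{th:j=k} and \ref{th:2} into \eqref{eq:4}, using the monotonicity $T_{\rm rel}(\bbT_L^k)\uparrow T_{\rm rel}(\bbT^k)$ as $L\to\infty$ to transfer the sub-critical lower bound of Theorem \ref{th:2} to $\bbT^k_{\lfloor L/2\rfloor}$ for $L$ large.

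\emph{Upper bound} $T_2\le cL\,T_{\rm rel}(\bbT_L^k)$. The naive reversibility bound $T_2\le T_{\rm rel}\log(1/\mu_{\min})$ is useless here since $\log(1/\mu_{\min})=\Theta(k^L)$. The plan is a two-stage argument via relative entropy. \emph{Stage 1} is to establish an exponential contraction of the entropy along the semigroup,
\[
\ent_\mu(h_t^\eta)\;\le\;e^{-ct/T_{\rm rel}(\bbT_L^k)}\,\ent_\mu(h_0^\eta),
\qquad \forall\,\eta\in\O_{\bbT_L^k},\ t\ge 0,
\]
for an absolute $c>0$, i.e.\ a modified log-Sobolev-type inequality with constant $O(T_{\rm rel})$. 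I would prove this recursively in $L$: using the decomposition $\bbT_L^k=\{r\}\cup\bigcup_{i=1}^k\bbT_{L-1,i}^k$ and the product form $\mu=\mu_r\otimes\bigotimes_i\mu_{\bbT_{L-1,i}^k}$, standard entropy tensorisation bounds $\ent_\mu(h)$ by a sum of entropies on the sub-trees plus a root contribution; the inductive hypothesis handles the sub-tree terms and the root term is absorbed in the Dirichlet form $\mathcal D(h)$ via the Poincar\'e inequality already packaged in $T_{\rm rel}(\bbT_L^k)$. \emph{Stage 2} converts contraction into mixing: since $\ent_\mu(h_0^\eta)\le\log(1/\mu(\eta))=O(k^L)$, after time $t=O(L\,T_{\rm rel}(\bbT_L^k))$ one has $\ent_\mu(h_t)\le 1/32$, and Pinsker gives $T_1\le cL\,T_{\rm rel}$. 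To upgrade to $T_2$, run the chain a further time of order $T_{\rm rel}$: hypercontractive considerations (or a direct argument) yield $\|h\|_\infty\le\text{const}$ by the end of Stage 2, and then the standard $L^2$ contraction $\|h_{t+s}-1\|_2\le e^{-s/T_{\rm rel}}\|h_t-1\|_2$ brings the $L^2$-norm below $1/4$.

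\emph{Lower bound} $T_1\ge c^{-1}L\,T_{\rm rel}(\bbT^k_{\lfloor L/2\rfloor})$. The bound $T_1\ge \tfrac12 T_{\rm rel}$ combined with Dirichlet form monotonicity (fixing the configuration on $\bbT_L^k\setminus\bbT_{v,L}^k$ for $v$ at depth $\lfloor L/2\rfloor$ only accelerates the dynamics) gives at once
\[
T_1(\bbT_L^k)\ge\tfrac12\,T_{\rm rel}(\bbT_{v,L}^k)\ge\tfrac12\,T_{\rm rel}(\bbT^k_{\lfloor L/2\rfloor}).
\]
To gain the extra factor $L$ I would use a distance/canonical-path argument: construct a test function $f$ with $\Var_\mu(f)=\Theta(1)$ whose relaxation requires $\Omega(L)$ successive ``waves'' of equilibration of depth-$\lfloor L/2\rfloor$ sub-trees; since each such wave costs $\Omega(T_{\rm rel}(\bbT^k_{\lfloor L/2\rfloor}))$ by the above comparison, Wilson's variance bound $T_1\ge c\,\log\bigl(\Var_\mu(f)/\var_{h_t}(f)\bigr)/\gap(\cL_{\bbT_L})$ (applied along the resulting depth $L$ chain of bottlenecks) produces the claimed factor of $L$.

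\emph{Main obstacle.} The hardest step is Stage 1 of the upper bound: for general kinetically constrained chains the modified log-Sobolev constant may be much larger than $T_{\rm rel}$. The recursive tree geometry plus the product form of $\mu$ are what make the entropy-contraction proof feasible here, exactly as was exploited in the sub-critical analysis of \cite{MT}.
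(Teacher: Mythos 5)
Your reading of the easy parts is correct: $T_1\le T_2$ is immediate, and (i)--(ii) follow from \eqref{eq:4} plus Theorems \ref{th:j=k} and \ref{th:2}. But both outer inequalities are handled differently from the paper, and your lower-bound plan has a genuine gap.

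\emph{Lower bound.} The paper's argument (following \cite{Ding}) hinges on a structural observation you do not make: because the constraint $c_x$ depends only on the \emph{descendants} of $x$, the OFA-kf dynamics on $\bbT_L^k$, when projected onto the $n=k^{\lfloor L/2\rfloor}$ disjoint sub-trees $\bbT^{(i)}$ of depth $\lceil L/2\rceil$ rooted at level $\lfloor L/2\rfloor$, \emph{coincides exactly} with the product chain $\otimes_i X_i$ of i.i.d.\ copies of the OFA-kf chain on a depth-$\lceil L/2\rceil$ tree. Once this is noted, a standard product-chain lower bound does all the work: via the multiplicativity of the Hellinger affinity $I_\cH(\pi,\nu)=\prod_i I_\cH(\pi_i,\nu_i)$ and the spectral bound $\|\pi_i-\nu^{x_i}_{i,t}\|_{TV}\ge\tfrac12 e^{-\lambda_i t}$ (starting each factor at the maximiser of its gap eigenfunction), one gets $\|\pi-\nu\|_{TV}\ge 1-\exp(-\tfrac18\sum_i e^{-2\lambda_i t})$, hence a mixing time of order $(\log n)/\max_i\lambda_i\sim L\,T_{\rm rel}(\bbT^k_{\lfloor L/2\rfloor})$. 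Your sketch misidentifies where the factor $L$ comes from: it is $\log n$ for exponentially many \emph{independent} sub-chains at a single level, not ``$\Omega(L)$ successive waves of equilibration.'' The latter picture is a bottleneck heuristic that does not translate into a rigorous bound, and your invocation of Wilson's method, while in the right spirit (Wilson applied to $f=\sum_i f_i$ would also yield the $\log n$ factor), cannot be set up at all without first recognising the projected product structure.

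\emph{Upper bound.} Here the paper simply cites \cite{MT}*{Corollary 1}, so there is nothing to reprove; your plan is therefore a genuinely different (and unnecessary) route, and a risky one. Stage 1 asserts a modified log-Sobolev inequality with constant $O(T_{\rm rel})$, but for kinetically constrained chains the (modified) log-Sobolev constant is in general \emph{not} comparable to $T_{\rm rel}$, and the proposed tensorisation does not obviously close: the constrained Dirichlet form $\cD_{\bbT}$ does not split across the decomposition $\{r\}\cup\bigcup_i\bbT_{L-1,i}^k$ in the way product-measure entropy tensorisation requires, because the flip rate at the root is blocked by the children. The bound in \cite{MT} is obtained by a direct $L^2$ argument exploiting the oriented tree geometry, not through any log-Sobolev inequality, and you should use that citation rather than attempt Stage 1. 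Finally, a small point on part (ii): the lower bound in \eqref{eq:4} involves $T_{\rm rel}(\bbT^k_{\lfloor L/2\rfloor})$, not $T_{\rm rel}(\bbT^k)$, so one needs the finite-tree lower bound $T_{\rm rel}(\bbT^k_{\lfloor L/2\rfloor})\ge c(p_c-p)^{-2}$; this is available from the proof of Theorem \ref{th:2} (which produces its test function on a depth-$\lfloor 1/\epsilon\rfloor$ tree) only for $L\gtrsim 1/\epsilon$, a restriction you should state rather than appeal to monotonicity in $L$ alone.
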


\subsection{Additional notation and technical preliminaries} 

We first introduce the natural bootstrap map for the model.
\begin{definition}
The bootstrap map $B:\{0,1\}^{\bbT^k}\to
\{0,1\}^{\bbT^k}$ associated to the OFA-kf model is defined by
\begin{equation}
  \label{eq:bootstrap map}
  B(\h)_x=
  \begin{cases}
0 & \text{if either  
$\h_x=0$ or $c_x(\h)=1$}\\
1 & \text{otherwise}
  \end{cases}
\end{equation}
with $c_x$ defined in \eqref{cx}. 
\end{definition}
\begin{remark}
Notice that: (i) if after $n$-iterations of the bootstrap map $c_x(B^n(\eta))=1$ then, even if $\eta_x=1$, the
percolation cluster of $1$'s attached to $x$ is contained in the
first $n$-levels below $x$ and (ii) the bootstrap critical point (see
e.g. \cite{Boot-Peres}) coincides with the percolation threshold $p_c=1/k$.
\end{remark}
Secondly we formulate two technical results which will be useful in the sequel. Let
$E^{(n)}_x=\{\eta:\ B^n(\eta)_x=1\}$ and define
$p_n:=\mu(E^{(n)}_r)$. Notice that $p_n$ is increasing in $p$ and that
$p_n\le p$ for all $n$.
\begin{Lemma}
\label{lemma:A}\ 
\begin{enumerate}[(i)]
\item If $p\leq p_c$ then $p_n\leq \frac{2}{(k-1)n}$ for all $n\geq
  1$.
\item Assume $p=p_c-\epsilon$ with $\epsilon\in[0,1/k]$.  Then
  $p_n\leq p(1-\epsilon k)^n$ for all $n\geq
  1$. 
\end{enumerate}
\end{Lemma}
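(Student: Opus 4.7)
The strategy is to reduce $p_n = \mu(E_r^{(n)})$ to a one-dimensional recursion arising from Bernoulli percolation on the tree. Induction on $n$, using that $B^{n+1}(\eta)_x = 1$ iff $B^n(\eta)_x = 1$ and some $y\in\cK_x$ satisfies $B^n(\eta)_y = 1$, shows that $B^n(\eta)_r = 1$ if and only if the root is joined to depth $n$ by a path of $\eta$-occupied vertices. Since under $\mu$ the $k$ subtrees rooted at the children of $r$ are i.i.d.\ copies of $\bbT^k$, we obtain the scalar recursion
\begin{equation*}
p_n = p\bigl[1 - (1-p_{n-1})^k\bigr], \qquad p_0 = p.
\end{equation*}
Both parts of the lemma are then analytic statements about this recursion.

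For part (ii), Bernoulli's inequality $(1-x)^k \geq 1 - kx$ on $[0,1]$ gives the crude bound $p_n \leq kp\, p_{n-1}$. At $p = p_c - \epsilon = 1/k - \epsilon$ this reads $p_n \leq (1-k\epsilon)\, p_{n-1}$, and iterating from $p_0 = p$ yields $p_n \leq p(1-k\epsilon)^n$.

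For part (i), note that $(p,x) \mapsto p[1-(1-x)^k]$ is increasing in each variable, so $p_n$ is monotone increasing in $p$ and it suffices to treat $p = p_c = 1/k$. At criticality the recursion map $f(x) = \tfrac{1}{k}[1-(1-x)^k]$ satisfies $f'(0)=1$, so the estimate from (ii) produces no decay; the correct quantity to control is the inverse $1/p_n$. The plan is to prove the discrete Lyapunov bound
\begin{equation*}
\frac{1}{p_n} - \frac{1}{p_{n-1}} \geq \frac{k-1}{2}, \qquad n \geq 1,
\end{equation*}
which telescopes into $1/p_n \geq n(k-1)/2$ and hence $p_n \leq 2/((k-1)n)$. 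Setting $x = p_{n-1} \in [0,1]$, this inequality is equivalent to
\begin{equation*}
2\bigl[kx - 1 + (1-x)^k\bigr] \geq (k-1)\, x \bigl[1 - (1-x)^k\bigr],
\end{equation*}
and after the substitution $y = 1-x$ reduces to checking that the polynomial $\phi(y) := (k-1) - (k+1)y + (k+1) y^k - (k-1) y^{k+1}$ is nonnegative on $[0,1]$. I would verify this by noting $\phi(1) = 0$, computing $\phi'(y) = (k+1)\bigl[k y^{k-1} - (k-1) y^k - 1\bigr]$, and observing that the bracket vanishes at $y=1$ and has derivative $k(k-1) y^{k-2}(1-y) \geq 0$ on $[0,1]$; hence the bracket is nonpositive, $\phi$ is decreasing, and $\phi(1)=0$ forces $\phi \geq 0$.

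The genuine difficulty is this polynomial inequality in (i): promoting the formal Taylor expansion $\tfrac{1}{k}[1-(1-x)^k] = x - \tfrac{k-1}{2} x^2 + O(x^3)$ into a one-sided estimate valid on the \emph{entire} interval $[0,1]$ rather than merely near the origin. Everything else---the percolation interpretation of $p_n$, the recursion, and part (ii)---is elementary.
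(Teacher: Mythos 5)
Your proof is correct, and part (ii) is essentially the paper's argument (Bernoulli's inequality vs.\ Boole's inequality applied to the same recursion — cosmetically different, substantively the same). Part (i), however, takes a genuinely different route. The paper uses truncated inclusion-exclusion to get the cubic upper bound
\[
p_{n+1}\;\le\; p_n - \tfrac{k-1}{2}p_n^2 + \tfrac{(k-1)(k-2)}{6}p_n^3,
\]
checks that the right-hand side is increasing in $p_n\in[0,1/k]$, and then runs a direct induction on the claimed bound $p_n\le 2/((k-1)n)$ with a separate base case at $n=2$. You instead prove the exact discrete Lyapunov inequality $1/p_n-1/p_{n-1}\ge(k-1)/2$ for the untruncated recursion, telescope, and reduce the whole thing to the single polynomial inequality $\phi(y)=(k-1)-(k+1)y+(k+1)y^k-(k-1)y^{k+1}\ge 0$ on $[0,1]$, which you settle by elementary calculus (showing $\phi$ is nonincreasing with $\phi(1)=0$). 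I checked the algebra: the reduction from the Lyapunov claim to $\phi\ge0$ via $y=1-x$ is correct, and the sign analysis of $\phi'$ and of the inner bracket $\psi(y)=ky^{k-1}-(k-1)y^k-1$ is right. Your version trades the paper's truncation-plus-monotonicity bookkeeping for one clean polynomial check valid on all of $[0,1]$ (a stronger range than needed, since $p_{n-1}\le 1/k$), and the telescoping makes the constant $2/(k-1)$ appear with no separate base case. Both arguments are correct; yours is a bit tidier.
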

\begin{proof}\
\smallskip
 
(i) Using the monotonicity in $p$ of the $p_n$'s it is enough to prove
the statement for $p=p_c$. We start from  
\begin{equation}
  \label{eq:3}
\mu\left(E^{(n+1)}_r\right)=p\mu\left(\cup_{x\in \cK_r}E^{(n)}_x\right),
\end{equation}
or, equivalently, 
\[
p_{n+1}=p(1-(1-p_n)^k).
\] Using inclusion-exclusion inequalities \eqref{eq:3} implies (recall
that $p=1/k$) 
\begin{align}
  \label{eq:2}
p_{n+1}&\le \frac 1k \left[k p_n - {k\choose 2}p_n^2 + {k\choose
    3}p_n^3\right] \nonumber\\&= p_n - \frac{(k-1)}{2}p_n^2
+\frac{(k-1)(k-2)}{6} p_n^3.
\end{align}
One readily checks that the r.h.s. of \eqref{eq:2} is increasing in
$p_n\in [0,1/k]$. Thus, if we assume inductively that $p_n \le
\frac{2}{(k-1)n},\ n\ge 2$, we obtain
\begin{align*}
 p_{n+1}\le
\frac{2}{(k-1)}\left[\frac 1n-\frac {1}{n^2} +
   \frac{2(k-2)}{3(k-1)n^3}\right]\le \frac{2}{(k-1)(n+1)} \quad n\ge 2.
\end{align*} 
The base case $p_2$ follows from the trivial observation that $p_2\le
p_1\le \frac 1k < \frac{1}{k-1}$.
\smallskip

(ii) Boole inequality applied to \eqref{eq:3} gives
\begin{align*}
  \label{eq:3}
p_{n+1}\le pkp_n= (1-\epsilon k)p_n\le \ldots\le (1-\epsilon k)^np. 
\end{align*}

\end{proof}
The second technical ingredient is the following monotonicity result for the spectral gap (see \cite {CMRT}*{Lemma 2.11} for a proof).
\begin{Lemma}
\label{gapmon}
Let $\bbT_1 \subset \bbT_2$ be two sub-trees of $\bbT^k$. Then,
$$
\gap({\mathcal{L}}_{\bbT_1}) \geq \gap({\mathcal{L}}_{\bbT_2}).
$$
\end{Lemma}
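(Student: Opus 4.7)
The plan is a direct Rayleigh-quotient comparison using test functions that depend only on the variables in $\bbT_1$. Writing $\cD_{\bbT_j}(f) := \mu_{\bbT_j}(f,-\cL_{\bbT_j}f)=\sum_{x\in\bbT_j}\mu_{\bbT_j}\bigl(c_{\bbT_j,x}\Var_x(f)\bigr)$, the variational formula \eqref{eq:gap} reads $\gap(\cL_{\bbT_j})=\inf_f \cD_{\bbT_j}(f)/\Var_{\bbT_j}(f)$. It suffices to show that for every local $f$ depending only on $\eta_{\bbT_1}$ the Dirichlet form shrinks when we pass from $\bbT_1$ to the larger tree $\bbT_2$, while the variance is unchanged; then infimizing over such $f$ immediately yields $\gap(\cL_{\bbT_2})\leq\gap(\cL_{\bbT_1})$.

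The only substantive input is the pointwise comparison of constraints: for every $x\in\bbT_1$ and every $\eta$,
\[
c_{\bbT_1,x}(\eta)\geq c_{\bbT_2,x}(\eta).
\]
Indeed, by the zero-extension convention \eqref{finitecx}, both sides are indicators that all $\bbT^k$-children of $x$ carry the value $0$ after extending the configuration by $0$ outside the relevant tree. A child of $x$ lying in $\bbT_2\setminus\bbT_1$ is \emph{automatically} set to $0$ under the $\bbT_1$-extension, but is a genuine configuration variable under the $\bbT_2$-extension; hence the $\bbT_2$-constraint is more restrictive. (In particular, if $x$ is a leaf of $\bbT_1$ then $c_{\bbT_1,x}\equiv 1$.)

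Now let $f$ be any local function depending only on $\eta_{\bbT_1}$. Since $\mu$ is a product measure, $\Var_{\bbT_2}(f)=\Var_{\bbT_1}(f)$, and $\Var_x(f)=0$ for $x\notin\bbT_1$. Therefore
\[
\cD_{\bbT_2}(f)=\sum_{x\in\bbT_1}\mu_{\bbT_2}\bigl(c_{\bbT_2,x}\Var_x(f)\bigr)\leq\sum_{x\in\bbT_1}\mu_{\bbT_2}\bigl(c_{\bbT_1,x}\Var_x(f)\bigr)=\sum_{x\in\bbT_1}\mu_{\bbT_1}\bigl(c_{\bbT_1,x}\Var_x(f)\bigr)=\cD_{\bbT_1}(f),
\]
where the last identity is the triviality that $c_{\bbT_1,x}\Var_x(f)$ depends only on $\eta_{\bbT_1\setminus\{x\}}$ so the $\mu_{\bbT_2\setminus\bbT_1}$-integration does nothing. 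Dividing by $\Var_{\bbT_2}(f)=\Var_{\bbT_1}(f)$ and taking the infimum over all such $f$ gives the claim.

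There is no real obstacle: the whole lemma is a consequence of the zero-boundary convention built into \eqref{finitecx}, which ensures that deleting variables \emph{relaxes} rather than \emph{creates} kinetic constraints on the surviving vertices. The only point that deserves a line of writing is the constraint inequality $c_{\bbT_1,x}\geq c_{\bbT_2,x}$; everything else is bookkeeping with a product measure and the variational principle.
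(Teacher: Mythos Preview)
Your argument is correct and is exactly the standard variational comparison: restrict to test functions supported on $\bbT_1$, use the product structure of $\mu$ to match variances, and use the pointwise inequality $c_{\bbT_1,x}\ge c_{\bbT_2,x}$ (immediate from the zero-extension convention \eqref{finitecx}) to compare Dirichlet forms. The paper does not give its own proof of this lemma---it simply cites \cite{CMRT}*{Lemma~2.11}---and the argument there is precisely the one you wrote.
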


\section{The critical case: proof of Theorem \ref{th:j=k}}
\label{section:critical}
\subsection{Upper bound of the relaxation time} 
Let $\bbT\equiv \bbT^k_L, \bbT_x\equiv T^k_{x,L}$ and $\hat
\bbT_x\equiv \hat T^k_{x,L}$. We divide the proof of the upper bound of
$T_{\rm rel}(\bbT)$ in three steps.
\smallskip

\noindent 
\subsubsection{First step.}[Comparison with a long-range auxiliary dynamics].
Motivated by \cite{MT} we introduce auxiliary  {\sl long range}
constraints as follows.
\begin{definition}
\label{meso}
For any integer $\ell\geq 1$ we set 
\begin{equation*}
  c_x^{(\ell)}(\h)=
  \begin{cases}
    1 &\text{ if
      $ c_x(B^{\ell-1} (\eta))=1$}\cr
0 &\text{otherwise.}
  \end{cases}
\end{equation*}
\end{definition}
\begin{remark}
One can use the functions $c_x^{(\ell)} $ to define an auxiliary long range dynamics with generator given by  \eqref{FAjf} with  $c_x$ replaced 
by $c_x^{(\ell)}$. For this new constrained dynamics 
a vertex $x$
is free to flip
iff, by a sequence of at most $\ell$ flips satisfying the original constraints
\eqref{cx} 
all the children of $x$ can be made
vacant. 
\end{remark}
Fix now $\d\in (0,1/9)$ and choose $\ell = (1-\d)L$ (neglecting integer
part). Let also $c_{\bbT,x}^{(\ell)}(\eta):=c_{x}^{(\ell)}(\eta^0)$ where
$\eta^0$ is given in Definition
\ref{finite} respectively. Notice that
$c_{\bbT,x}^{(\ell)}(\eta)\equiv 1$ iff $d_x> L-\ell$. 
We will establish the inequality
\begin{equation}
  \label{PI}
\Var_\bbT(f)\le \l \sum_{x\in \bbT} \mu_\bbT\left(\Var_x(\mu_{\hat
    \bbT_x}(c_{\bbT,x}^{(\ell)}f)\right)\qquad \forall f
\end{equation}
with $\l= 2(\frac{1-\d}{1-9\d})$.
\begin{remark}
Inequality \eqref{PI} will be proven following the strategy of \cite{MT}. Notice however that here we don't perform another Cauchy-Schwartz
inequality to pull out the constraint $c_{\bbT,x}^{(\ell)}$ and get the
 Dirichlet form with long range constraints.
\end{remark}
We  start from
\begin{equation} \label{eq:1}
\Var_\bbT(f)\le \sum_{x\in \bbT} \mu_\bbT\left(\Var_x(\mu_{\hat \bbT_x}(f)\right).
\end{equation}
The above inequality follows easily from a repeated use of the formula
for conditional variance and we refer to section 4.1 in \cite{MT} for
a short proof.
We now examine a generic term $\mu\(\Var_x\(\mu_{\hat \bbT_x}(f)\)\)$
in the r.h.s.\ of \eqref{eq:1}. 
We write 
\begin{equation*}
 \mu_{\hat \bbT_x}(f)=\mu_{\hat \bbT_x}\(c_{\bbT,x}^{(\ell)}f\) + \mu_{\hat \bbT_x}([1-c_{\bbT,x}^{(\ell)}]f) 
\end{equation*}
so that
\begin{equation}
  \Var_x\( \mu_{\hat \bbT_x}(f) \) \le 2\Var_x\(\mu_{\hat
    \bbT_x}\(c_{\bbT,x}^{(\ell)}f\) \) + 2 \Var_x\(\mu_{\hat \bbT_x}\((1-c_{\bbT,x}^{(\ell)})f\)\) .
\label{D0}\end{equation}
We now consider the second term $\Var_x\(\mu_{\hat
  \bbT_x}\((1-c_{\bbT,x}^{(\ell)})f\)\)$. Without loss of generality we can
assume $\mu_{\hat \bbT_x}(f)=0$. 
Recall that the constraint $c_{\bbT,x}^{(\ell)}$ depends only on the spin configuration in the first $\ell$ levels below $x$, in the sequel denoted by $\D_x$ (see Figure \ref{fig:albero1}). 

\begin{figure}[ht] 
\psfrag{L}{$L$}
\psfrag{L2}{$L-d_y$}
\psfrag{x}{$x$}
\psfrag{y}{$y$}
\psfrag{D}{$\Delta_x$}
\psfrag{Ty}{$\hat \bbT_y$}
\psfrag{l}{$\ell$}
\psfrag{r}{$r$}
\includegraphics[width=.70\columnwidth]{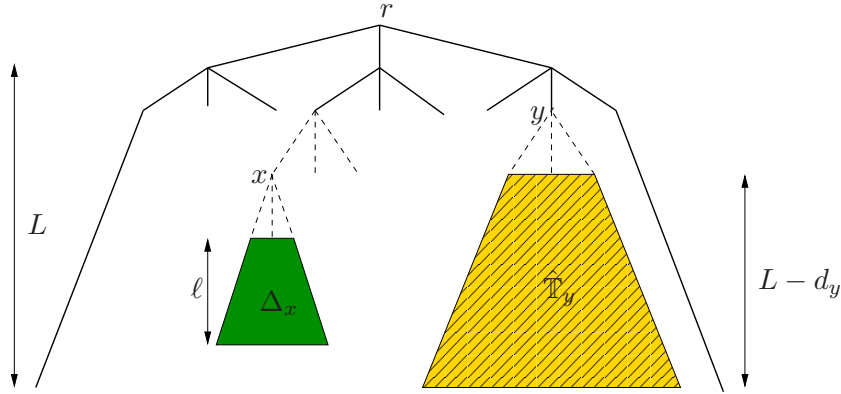}
\caption{For $k=3$, the tree $\bbT$ rooted at $r$, of depth $L$ (\textit{i.e.}\ with $L$ levels below $r$), the set $\Delta_x$  and the sub-set $\hat \bbT_y$.}
\label{fig:albero1}
\end{figure}

Thus
\[
\mu_{\hat \bbT_x}\((1-c_{\bbT,x}^{(\ell)})f\)= \mu_{\hat \bbT_x}\((1-c_{\bbT,x}^{(\ell)})\mu_{\hat\bbT_x\setminus \D_x}(f)\)
\]
and 
\begin{align}
   \Var_x\(\mu_{\hat \bbT_x}\((1-c_{\bbT,x}^{(\ell)})f\)\) &\le \mu_{\bbT_x}\(\mu_{\hat \bbT_x}\((1-c_{\bbT,x}^{(\ell)})\mu_{\hat\bbT_x\setminus \D_x}(f)\)^2\)
  \nonumber \\
&\leq \mu_{\bbT_x}(1-c_{\bbT,x}^{(\ell)})
\mu_{\bbT_x}\(\mu_{\hat\bbT_x\setminus \D_x}(f)^2\) \nonumber\\ 
&=
\mu_{\bbT_x}(1-c_{\bbT,x}^{(\ell)}) \Var_{\bbT_x}\( \mu_{\hat\bbT_x\setminus
  \D_x}(f)\)\nonumber\\
&\le \mu_{\bbT_x}(1-c_{\bbT,x}^{(\ell)})\sum_{y\in \D_x\cup x}\mu_{\bbT_x}\left(\Var_y(\mu_{\hat \bbT_y}(f)\right)
 \label{D1}
\end{align}
where we used Cauchy-Schwartz inequality, the fact that $c_{\bbT,x}^{(\ell)}$ does
not depend on $\h_x$ and \eqref{eq:1} in the last inequality.  From the definition of $c_{\bbT,x}^{(\ell)}$ on the finite tree $\bbT$ it holds
\begin{equation}
  \label{casicx}
 \mu_{\bbT_x}(1- c_{\bbT,x}^{(\ell)})=
  \begin{cases}
0 & \text{if $d_x>\delta L$}\\
p_{\ell}/p& \text{otherwise}
  \end{cases}
\end{equation}


In conclusion, using \eqref{D0}, \eqref{D1}and \eqref{casicx},
\begin{gather}
\label{altra}
\ \sum_{x\in\bbT} \!\mu_\bbT\left[\Var_x(\mu_{\hat \bbT_x}(f))\right] \\ 
 \le 2\sum_{x\in\bbT}\!\mu_\bbT\left[\Var_x(\mu_{\hat \bbT_x}(c_{\bbT,x}^{(\ell)}f)\right]
   +  2\frac{p_{\ell}}{p}\sum_{\genfrac{}{}{0pt}{}{x:}{d_x \leq\delta
       L}}\sum_{y\in \D_x\cup x}\mu_{\bbT}[\Var_y(\mu_{\hat
     \bbT_y}(f)]\nonumber \\
\le 2\sum_{x\in\bbT}\!\mu_\bbT\left[\Var_x(\mu_{\hat \bbT_x}(c_{\bbT,x}^{(\ell)}f)\right]
   +  2\frac{p_{\ell}}{p}\left[\max_z N_z \right]\sum_{y}\mu_{\bbT}[\Var_y(\mu_{\hat
     \bbT_y}(f)]
  \end{gather}
where 
\[
N_z:=\#\{x:\ \D_x \ni z,\ d_x\le \d L\}\le \min(\d L,\ell+1).
\] 
Part (i) of Lemma \ref{lemma:A} implies that $p_{\ell}\le \frac{2}{(k-1)\ell}=\frac{2}{(k-1)(1-\delta)L}$ so that
\begin{gather}
\label{sabato} \sum_{x\in\bbT} \mu_\bbT\left(\Var_x(\mu_{\hat
    \bbT_x}(f)\right)\nonumber\\
\leq
2\sum_{x\in\bbT}\!\mu_\bbT\left[\Var_x(\mu_{\hat
    \bbT_x}(c_{\bbT,x}^{(\ell)}f)\right]+ \frac{4\delta
}{p(1-\delta)(k-1)}\sum_{x\in\bbT}\mu_{\bbT}[\Var_x(\mu_{\hat
  \bbT_x}(f)]
\end{gather}
Since $p=1/k$ and $k/(k-1)\leq 2$, inequality \eqref{PI} holds with $\l=
2(1-\d)/(1-9\d)$ provided  $8\d/(1-\d)<1$.
\smallskip

\noindent 
\subsubsection{Second step}[Analysis of the auxiliary dynamics].
 Let $h_i= \a^i$, 
$\a>1$ to be fixed later on, and
let 
\begin{equation}
\label{Ti}
T_{i}:= T_{\rm rel}(\bbT^k_{h_{i}\wedge \ell}).
\end{equation}
We shall now prove that 
\begin{equation}\label{eq2.8}
\sum_{x\in \bbT} \mu_\bbT  \left(\Var_x( \mu_{\hat \bbT_x}(c_{\bbT,x}^{(\ell)}f))\right) 
\leq 
\left[2+ \frac{4\a  }{p(k-1)} \left(\sum^{n-1}_{i=1} \sqrt{T_{i}}\right)^2 \right]\cD_\bbT(f),
\end{equation}
with $n$ such that $h_{n-1}< \ell \le h_n$. 

The starting point is \eqref{PI}. For any $x\in \bbT$ we
introduce a scale decomposition of the constraint $c_{\bbT,x}^{(\ell)}$ as
follows $c_{\bbT,x}^{(\ell)}=\sum_{i=0}^{n-1}\chi_i +c_{\bbT,x}$,
where $\chi_i:= c_{\bbT,x}^{(h_{i+1}\wedge
  \ell)}-c_{\bbT,x}^{(h_{i}\wedge \ell)}$. 
Thus 
\begin{gather*}
\sum_{x\in \bbT} \mu_\bbT  \left(\Var_x( \mu_{\hat \bbT_x}(c_{\bbT,x}^{(\ell)}f))\right) 
\\\leq 2 \sum_{x\in \bbT} \mu_\bbT  \left(\Var_x( \mu_{\hat
    \bbT_x}(c_{\bbT,x} f))\right) +
2 \sum_{x\in \bbT} \mu_\bbT  \left(\Var_x( \mu_{\hat
    \bbT_x}(\sum_{i=0}^{n-1}\chi_i f))\right)\\
\leq 2\cD_{\bbT}(f) + 2 \sum_{x\in \bbT} \mu_\bbT  \left(\Var_x( \mu_{\hat
    \bbT_x}(\sum_{i=0}^{n-1}\chi_i f))\right),
  \end{gather*}
where in the last inequality we used convexity to conclude that
\[
\mu_\bbT  \left(\Var_x( \mu_{\hat\bbT_x}(c_{\bbT,x} f))\right)\le
\mu_\bbT\left(c_{\bbT,x}\Var_x(f)\right).
\]
We now examine the key term $\sum_{x\in \bbT} \mu_\bbT  \left(\Var_x( \mu_{\hat
    \bbT_x}(\sum_{i=0}^{n-1}\chi_i f))\right)$.

Observe first that $\chi_i=0$ if
$h_i\ge \ell$ and that  
$\chi_i=1$ implies the number of iterations of the bootstrap map necessary
to make the node $x$ flippable is at least 
$h_i$ but no more than $h_{i+1}\wedge \ell$. In particular, if $\chi_i(\eta)=1$,
there exists a ``line'' of  zeros of $\eta$ within $h_{i+1}\wedge \ell$ levels
below $x$. For such an $\eta$ we denote by $\G(\h)$ the ``lowest''
such line constructed as follows. \\
Consider the nodes in $\bbT_x$ at
distance $h_{i+1}\wedge \ell$ from $x$. Let us order them from left to right as
$z_1,z_2,\dots$; start from $z_1$ and find the first empty site on the
branch leading to $x$. Call this vertex $y_1$ and forget about all the
$z_i$'s having $y_1$ as ancestor. Say that the remaining nodes are $z_{k_1},z_{k_1+1},\dots$;
repeat the construction for $z_{k_1}$ to get a new empty node
$y_2$ and so forth. At the end of this procedure some of the $y_i$
may have some other $y_k$ as ancestor. In this case we remove the former
from our collection and we relabel accordingly. The line $\G(\h)$ is then the final collection
$(y_1,y_2,\dots)$. 

We denote by $\cG_i$ the space of all possible
realisations of $\G$. Moreover, given $\g\in \cG_i$, we denote by
$\hat\bbT_x^{\g,+}$ all the nodes in $\hat\bbT_x$ which have no ancestor
  in $\g$, \ie the part of the tree ``above'' $\g$. 
Note that the above construction of $\Gamma$ is made without looking at the configuration above $\Gamma$.
\begin{figure}[h] 
\psfrag{L}{$L-dx$}
\psfrag{hi}{$h_i$}
\psfrag{x}{$x$}
\psfrag{hi+1}{$h_{i+1}$}
\psfrag{Ig}{$I_\gamma$}
\psfrag{T+}{$\bbT_x^{\g,+}$}
\psfrag{T-}{$\bbT_x^{\g,-}$}
\includegraphics[width=1\columnwidth]{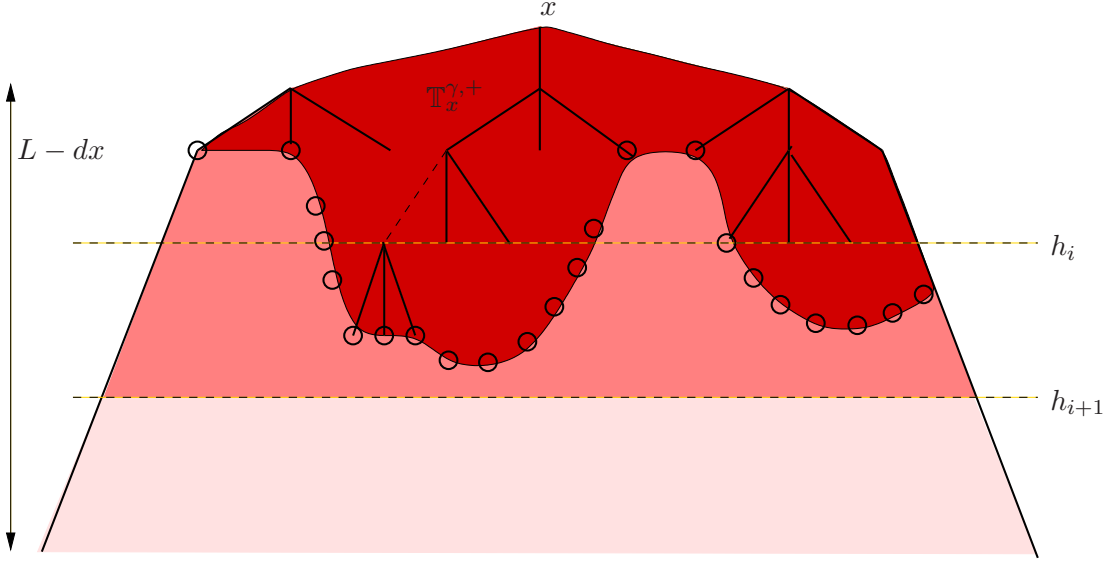}
\caption{For $k=3$, the sub-tree $\bbT_x$ rooted at $x$ and a configuration $\eta$ such that $\chi_i(\eta)=1$. The line of empty sites corresponds to a set $\gamma \in \cG_i$.
}
\label{fig:albero2}
\end{figure}
This observation together with the definition of the variance and Cauchy-Schwarz inequality gives
\begin{align}
\Var_x\left(\mu_{\hat
    \bbT_x}(\sum_{i=0}^{n-1}\chi_i f)\right) 
    &=  p(1-p)\left[\,\sum_{i=0}^{n-1} \mu_{\hat
    \bbT_x}(\chi_i \nabla_x f)\,\right]^2 \nonumber\\
&=p(1-p)\left [ \sum_{i=0}^{n-1} \sum_{\g\in
   \cG_i}\mu_{\hat\bbT_x\setminus \hat T_x^{\gamma,+}}\left(\id_{\G=\g}\;\mu_{ \hat T_x^{\gamma,+}}\left(\chi_i
\nabla_x f \right)\right)\right]^2 \\
&\le p(1-p)\left [ \sum_{i=0}^{n-1} \sum_{\g\in
   \cG_i}\mu_{\hat\bbT_x\setminus \hat T_x^{\gamma,+}} \left(\id_{\G=\g}\sqrt{\mu_{\hat T_x^{\gamma,+}}(
\chi_i) \mu_{\hat T_x^{\gamma,+}}(|\nabla_x f|^2) }\right)\right]^2.
\nonumber   \label{SQ}
\end{align}
where $\nabla_xf(\eta)=f(\eta^x)-f(\eta)$ with $\eta^x_y= \eta_y$ if
$y\neq x$ and $\eta^x_x=1-\eta_x$. 
Consider now the last factor inside the square root and multiply it by $p(1-p)$. It holds
 \begin{gather*}
 p(1-p) \mu_{T_x^{\gamma,+}}(|\nabla_x f|^2)=\mu_{T_x^{\gamma,+}}(\Var_x(f))\leq \Var_{\bbT_x^{\g,+}}(f)
\le T_{\rm rel}(\bbT_x^{\g,+})\cD_{\bbT_x^{\g,+}}(f) 
     \end{gather*}
     where we used the convexity of the variance and the Poincar\'e inequality.
Lemma \ref{gapmon} now gives $
T_{\rm rel}(\bbT_x^{\g,+})\le T_{i+1}.
$
 In conclusion 
\[
p(1-p)
\mu_{T_x^{\gamma,+}}(|\nabla_x f|^2)
\le
T_{i+1}\cD_{\bbT_x^{\g,+}}(f).
\]
To bound  the first factor inside the square root of \eqref{SQ} we
note that $\id_{\G=\g}c_{\bbT,x}^{(h_i)}=\id_{\G=\g}c_{\bbT_x^{\g,+},x}^{(h_i)}$.
Indeed the finite volume constraints $c_{T_x^{\gamma,+},y}$ are defined with zeros on the set $\gamma$ of the leaves of $T_x^{\gamma,+}$  (see \eqref{finitecx}) and in turn $\id_{\Gamma(\eta)=\gamma}$  guarantees the presence of such zeros for the configuration  $\eta$.
Thus, using the monotonicity on the volume of the probability that the root $x$ is connected to the level $h_i$,
\begin{gather*}
\id_{\G=\g}\mu_{T_x^{\gamma,+}}(\chi_i)\leq\id_{\G=\g}\mu_{T_x^{\gamma,+}}(1-c_x^{(h_i)})\\=\id_{\G=\g}\mu_{T_x^{\gamma,+}}(1-c_{T_x^{\gamma,+},x}^{(h_i)})\leq \mu(1-c_x^{(h_i)})=p_{h_i}/p.
\end{gather*}
In conclusion, the r.h.s.\ of \eqref{SQ} is bounded from above by
\begin{align}
\label{basic}
&\frac{1}{p}\left( \sum_{i=0}^{n-1} \sqrt{ T_{i+1}p_{h_i}} \; \mu_{\hat \bbT_x}\Bigl(\sum_{\gamma\in\mathcal G_i}\id_{\Gamma=\gamma}\sqrt{\cD_{\bbT_x^{\gamma,+}}(f)}\Bigr) \right)^2 \nonumber \\
\leq
&\frac{1}{p}\left( \sum_{i=0}^{n-1} \sqrt{ T_{i+1}p_{h_i}} \; \sqrt{\mu_{\hat \bbT_x}\Bigl(\sum_{\gamma\in\mathcal G_i}\id_{\Gamma=\gamma}\cD_{\bbT_x^{\gamma,+}}(f)}\Bigr)
 \right)^2 \nonumber \\
\leq 
&\frac{1}{p} \left(\sum_{i=0}^{n-1} \sqrt{T_{i+1}}\right) \left(\sum_{i=0}^{n-1} \sqrt{T_{i+1}}\;p_{h_i}\;\mu_{\hat
    \bbT_x}\Bigl(\sum_{\gamma\in\mathcal G_i}\id_{\Gamma=\gamma}\cD_{\bbT_x^{\gamma,+}}(f)\Bigr)\right)\nonumber\\
\leq 
&\frac{1}{p} \left(\sum_{i=0}^{n-1} \sqrt{T_{i+1}}\right) \left(\sum_{i=0}^{n-1}
  \sqrt{T_{i+1}}\, p_{h_i} \sum_{y\in \hat \bbT_x\atop d_y\le d_x+h_{i+1}}\mu_{\hat \bbT_x}(c_y \Var_y(f))\right)
\end{align}
where we used the Cauchy-Schwarz inequality in the first and second
inequality together with 
 \[
 \mu_{\hat
    \bbT_x}\Bigl(\sum_{\gamma\in\mathcal G_i}\id_{\Gamma=\gamma}\cD_{\bbT_x^{\gamma,+}}(f)\Bigr)
\leq \sum_{y\in \hat \bbT_x\atop d_y\le d_x+h_{i+1}}\mu_{\hat\bbT_x}(c_{\bbT,y} \Var_y(f)) 
\]
because $\id_{\Gamma(\eta)=\gamma}c_{T_x^{\gamma,+},y}(\eta)= \id_{\Gamma(\eta)=\gamma} c_{\bbT,y}(\eta)$. 
If we now average over $\mu_{\bbT}$ and sum over $x\in\bbT$ the above result we get that
\begin{align*}
 &\sum_{x\in \bbT} \mu_\bbT  \left(\Var_x( \mu_{\hat \bbT_x}(\sum_{i=0}^{n-1}\chi_if))\right) \\
& \le \frac{1}{p}
 \left(\sum_{i=0}^{n-1} \sqrt{T_{i+1}}\right) \left(\sum_{i=0}^{n-1}
  \sqrt{T_{i+1}}\, p_{h_i}\sum_{x\in \bbT} \sum_{y\in \hat \bbT_x\atop d_y\le d_x+h_{i+1}}\mu_\bbT(c_{\bbT,y} \Var_y(f))\right)\\
&\le \frac{1}{p}
 \left(\sum_{i=0}^{n-1} \sqrt{T_{i+1}}\right) \left(\sum_{i=0}^{n-1}
  \sqrt{T_{i+1}}\, p_{h_i}h_{i+1}\right)\cD_\bbT(f)\\
&\le \frac{2\a}{p(k-1)} \left(\sum_{i=0}^{n-1} \sqrt{T_{i+1}}\right)^2 \cD_\bbT(f)
\end{align*}
and \eqref{eq2.8} follows. Above we used the exponential growth of the
scales $\{h_i\}_i$ together with (i) of Lemma \ref{lemma:A} to
obtain $p_{h_i}h_{i+1}\le 2\a/(k-1)$. 
\smallskip

 \noindent
\subsubsection{Third step}[Recurrence].
With the above notation \eqref{PI} and \eqref{eq2.8} yield the following key recursive
inequality:
\begin{align*}
T_{\rm rel}(\bbT)\le  \l\left[2+ \frac{4\a  }{p(k-1)} \left(\sum^{n-1}_{i=0} \sqrt{T_{i}}\right)^2 \right]
\end{align*}
with $T_i$ given by \eqref{Ti} and $\l=2\frac{1-\d}{1-9\d}$. Suppose now that $L=\a^{N+1}$ and
$\ell = \a^{N}$ with $\a= (1-\d)^{-1}$. Then
$T_{\rm rel}(\bbT)=T_{N+1}$ and $n=N$. If we set $a_i:=\sqrt{T_i}$ then
we get
\[
a_{N+1}\le c \sum_{i=0}^N a_i,\quad c=\l^{1/2}\left(2+\frac{4\a}{p(k-1)} \right)^{1/2},
\]
which implies that $b_n:= \sum_{i=0}^n a_i$ satisfies $b_{N+1}\le
(1+c)b_N$. In conclusion 
\[
T_{\rm rel}(\bbT)= a_{N+1}^2\le b^2_{N+1} \le (1+c)^{2N}b^2_1.
\] 
The proof of the upper bound of $T_{\rm rel}(\bbT)$ in Theorem
\ref{th:j=k} is complete if the depth $L$ is of the form $\a^{n}, n\in
\bbN$. The extension to general values of $L$ follows at once from Lemma
\ref{gapmon}.

\subsection{Lower bound on the relaxation time $T_{\rm rel}$}
\label{lwb}
Let us consider as test function to be inserted into the
variational characterisation of $T_{\rm rel}(\bbT)$ the cardinality
$N_r$ of
the percolation cluster $\cC_r$ of occupied sites associated to the root $r$. More
formally 
\[
N_r(\eta):= \#\{x\in \bbT: \ \eta_y=1 \ \forall y\in \g_x\}
\] 
where $\g_x$ is the unique path in $\bbT$ joining $x$ to the root
$r$. Notice that $N_r$ can be written as $
N_r(\eta)= \eta_r\bigl(\sum_{i=1}^k N_{x_i}+1\bigr)$, 
where $\{x_i\}_{i=1}^k$ are the children of the root and $N_{x_i}$ denotes
the analogous of the quantity $N_r$ with $\bbT$ replaced by the sub-tree
$\bbT_{x_i}$ rooted at $x_i$. 

We now compute the variance and Dirichlet form of $N_r$. Clearly
\[c^{-1}\sum_{x\in \bbT}\mu(x \text{ is a leaf of } \cC_r)\le
\cD_{\bbT}(N_r)\le c \sum_{x\in \bbT}\mu(x \text{ is a leaf of }
\cC_r) \le c\mu(N_r)
\]  
for some constant $c=c(k)$.  Moreover  $\mu(N_r)=
p\left(k\mu(N_{x_1})+1\right)$ which, for $p=p_c=1/k$, implies  that $\mu(N_r)=(L+1)/k$. 
To compute $\var_{\bbT}(N_r)$ we use the above expression for $N_r$
together with the formula for conditional
variance to write
\begin{align}
\label{varf}
  \var_{\bbT}(N_r)&= \mu\left(\var_{\bbT}(N_r\tc \eta_r)\right) +
    \var_{\bbT}\bigl(\mu(N_r\tc \eta_r)\bigr)\nonumber\\
&= pk \var_{\bbT_{x_1}}(N_{x_1})  + \var_{\bbT}\bigl(\eta_r (k
  \mu(N_{x_1}) +1)\bigr)\\
&= \var_{\bbT_{x_1}}\bigl(N_{x_1}\bigr) +p(1-p)(L+1)^2.
\nonumber
\end{align}
Hence $ \var_{\bbT}(N_r)\geq c'L^3$ and 
\[
T_{\rm rel}(\bbT)\ge \frac{\var_{\bbT}(N_r)}{\cD_{\bbT}(N_r)}\ge c'' L^2.
\]
\qed

 \section{The quasi-critical case: proof of Theorem \ref{th:2}}
Here we assume $p=p_c-\epsilon$, $\epsilon>0$ and, without loss of
generality, we assume that  $\epsilon k\ll 1$. Recall that we work
directly on the infinite tree $\bbT^k$. 
\subsection{Upper bound on the relaxation time $T_{\rm rel}$}
We first claim that, for any $\ell$ such that
$ 2\ell(1-\epsilon k)^{\ell}<1$,
one has
\begin{equation}\label{neweq}
\Var(f)\le \lambda \sum_{x\in\bbT^k}\mu\left(\Var_x(\mu_{\hat \bbT_x}(c_{\bbT,x}^{(\ell)}f)\right)
\end{equation}
with $\lambda=\frac{2}{1-2(\ell+1)(1-\epsilon k)^{\ell}}$.
The proof of \eqref{neweq} starts from inequality \eqref{D1}, whose
derivation does not depend on the value of $p$. 
After that we proceed as follows. Since $p=p_c-\epsilon$, 
Lemma \ref{lemma:A}(ii) implies that
\[
\mu_{T_x}(1-c_{\bbT,x}^{(\ell)})=\frac{p_{\ell}}{p}\leq (1-\epsilon
k)^{\ell}\quad \forall x\in \bbT^k.
\]
Thus
\begin{align*}
\var(f)&\le  \sum_{x\in\bbT^k} \!\mu\left[\Var_x(\mu_{\hat
    \bbT_x}(f))\right] \\
 &\le 2\sum_{x\in\bbT^k}\!\mu_\bbT\left[\Var_x(\mu_{\hat \bbT_x}(c_{\bbT,x}^{(\ell)}f)\right]
   +  2(\ell+1)(1-\epsilon k)^{\ell}\sum_{x\in \bbT^k}\mu[\Var_x(\mu_{\hat \bbT_x}(f)]
  \end{align*}
  and \eqref{neweq} follows.

Now choose $\ell=-2\frac{\log(\epsilon k)}{\epsilon k}$,
so that $\lambda<4$ in \eqref{neweq} for
any $\epsilon$ small enough, and define, for $x\in \bbT^k$, $\bbT_x$ as
the finite $k$-ary
tree rooted at $x$ of depth $\ell$. 

Exactly the same arguments leading to \eqref{basic}, but without the
subtleties of the intermediate scales $\{h_i\}_i$, show that
\begin{equation}
  \label{lampo}
  \mu\left(\Var_x(\mu_{\hat \bbT_x}(c_x^{(\ell)}f)\right) \le
  T_{\rm rel}(\bbT)\sum_{y\in\bbT_x} \mu\left(c_y
    \Var_y(f)\right).
\end{equation}
If we now combine \eqref{lampo} together with \eqref{neweq} we get
\begin{equation}\label{lunedi}
\Var(f)\le 4\ell\ T_{\rm rel}(\bbT) \cD(f)
\end{equation}
for all $\epsilon$ small enough.
Finally we claim that $T_{\rm rel}(\bbT)\le c \ell^\b$ for some
appropriate constants $c,\b$. 

To prove the claim it is
enough to observe that, in its proof for the case $p=p_c$ given in section
\ref{section:critical}, only \emph{upper bounds} on
percolation probabilities played a role. By monotonicity these bounds
hold for \emph{any} $p\le p_c$. Hence the claim.
In conclusion 
\[
\Var(f)\le c\ell^{1+\beta}\cD(f)
\]
and $T_{\rm rel}\le c \ell^{1+\b}= c' \epsilon^{-(1+\b)}$.

\subsection{Lower bound of the relaxation time $T_{\rm rel}$}
Thanks to Lemma \ref{gapmon}, $T_{\rm rel}\ge T_{\rm rel}(\bbT)$ for
any finite sub-tree $\bbT$.
We now choose $\bbT$ as the $k$-ary tree
rooted at $r$ with depth $\ell=\lfloor 1/\epsilon \rfloor$ and proceed exactly as in the proof of Theorem \ref{th:j=k}. 
Using the notation of section \ref{lwb} we have
\[
\cD_{\bbT}(N_r)\le c\mu(N_r)\le c' \ell
\] 
where we used the fact that the average of $N_r$ at $p<p_c$  is bounded from above
by the same average computed at $p=p_c$ since $N_r$ is increasing (w.r.t. the natural partial order in
$\O_{\bbT}$).
To compute $\var_{\bbT}(N_r)$ we proceed recursively starting from (cf \eqref{varf})
\begin{align*}
  \var_{\bbT}(N_r)
&= (1-k\epsilon)\var_{\bbT_{x_1}}(N_{x_1})  +
\frac{1-p}{p}\mu(N_r)^2\\
\mu(N_r)&= (1-\epsilon k)\mu(N_{x_1}) +p
\end{align*}
Since the number of steps of the iteration is $\lfloor 1/\epsilon
\rfloor$  one immediately concludes that $\mu(N_r)\ge c_k \ell$ and $
\var_{\bbT}(N_r)\ge c'_k \ell^3$ for
some constant $c_k$ depending only on $k$. 
Thus
\[
T_{\rm rel}\ge T_{\rm rel}(\bbT)\ge
\frac{\var_{\bbT}(N_r)}{\cD_{\bbT}(N_r)}\ge c \ell^2 = c\,\epsilon^{-2},
\]
for some constant $c>0$.
\section{Mixing times: proof of Theorem~\ref{lemmamixing}}
The specific statement (i) and (ii) are a direct consequence of
\eqref{eq:4}, Theorem \ref{th:j=k} and Theorem \ref{th:2}.
The upper bound $
T_1(\bbT)\leq T_2(\bbT)\leq cLT_{\rm rel}(\bbT)$
was proved in \cite {MT}*{Corollary 1]}. It remains to prove the
lower bound and this is what we do now following an idea of \cite{Ding}. 

Consider two probability measures $\pi,\nu$ on $\O_{\bbT}$ and recall their \emph{Hellinger} distance
\[
d_\cH(\pi,\nu):=\sqrt{2-2I_\cH(\pi,\nu)},
\]
where 
\[
I_\cH(\pi,\nu):= \sum_\o\sqrt{\pi(\o)\nu(\o)}.
\]
Clearly 
\[
I_\cH(\pi,\nu)\ge \sum_{\eta\in \O_\bbT}\pi(\eta)\wedge \nu(\eta)\ge
1- \|\pi-\nu\|_{TV}.
\]
If we combine the above inequality with \cite{Evans}*{Lemma 4.2 (i)}
we get 
\begin{align*}
 \frac 12 d_\cH(\pi,\nu)^2\le  \|\pi-\nu\|_{TV}\le d_\cH(\pi,\nu).
\end{align*}
Assume now that $\pi,\nu$ are product measures, $\pi=\prod_{i=1}^n
\pi_i,\ \nu=\prod_{i=1}^n\nu_i$, so that
\[
I_\cH(\pi,\nu):= \prod_{i=1}^n I_\cH(\pi_i,\nu_i).
\] 
Therefore
\begin{align}
\label{eq:1e}  
\|\pi-\nu\|_{TV}&\ge 1-I_\cH(\pi,\nu)=1-\prod_{i=1}^n I_\cH(\pi_i,\nu_i)\nonumber\\
&=1- \prod_{i=1}^n \left(1-\frac 12 d_\cH(\pi_i,\nu_i)^2\right) \nonumber\\
&\ge 1-\prod_{i=1}^n \left(1-\frac 12 \|\pi_i-\nu_i\|^2_{TV}\right) \nonumber\\
&\ge 1-e^{-\sum_i \frac 12 \|\pi_i-\nu_i\|^2_{TV}}.
\end{align}  
Suppose now that, for each $i\le n$, $\nu_i$ is the distribution at time $t$ of some finite, ergodic, continuous time Markov chain $X^{(i)}$, reversible w.r.t. $\pi_i$  and with initial state $x_i$. 
In this case the measure $\nu$ is the distribution at time $t$ of the
product chain $X=\otimes_i X_i$ started from $x=(x_1,\dots,x_n)$ and $\pi$ is the reversible measure .

Let $\l_i$ be the spectral gap of the chain $X^{(i)}$, let $f_i$ be the corresponding eigenvector  and choose the starting state $x_i$ in such a way that
$|f_i(x_i)|=\|f_i\|_\infty$. Then
\begin{align}
\label{eq:2e}
\|\pi_i-\nu_i\|_{TV}&\ge \frac 12 \frac {1}{\|f_i\|_\infty} |\pi_i(f_i)-\nu_i(f_i)|=\frac 12 \frac{|f(x_i)|}{\|f_i\|_\infty}e^{-\l_i t}\nonumber\\
&=\frac 12 e^{-\l_i t},
  \end{align}
where we used $\pi_i(f_i)=0$ because $f_i$ is orthogonal to the constant functions.

In conclusion, by combining together \eqref{eq:1e} and \eqref{eq:2e}, we get
\[
\|\pi-\nu\|_{TV}\ge 1-e^{-\frac 18\sum_i e^{-2\l_i t}}.
\]
Therefore, if $t=t^*$ with
\[
t^*=\frac 12\left[\frac 1{\max_i \l_i}\log n - \frac{1}{\min_i \l_i}\log 8\right],
\] 
then $\|\pi-\nu\|_{TV}\ge 1-e^{-1 }$.
Thus the mixing time of the product chain $X$ is larger than $t^*$. \\
We now apply the above strategy to prove a lower bound on
$T_1(\bbT)$.

Let $\bbT^{(i)}$ be the $i^{th}$ (according to some arbitrary order)
$k$-ary sub-tree of depth $\lceil L/2\rceil$ rooted at the
$\lfloor L/2\rfloor $-level of $\bbT$ and consider the OFA-kf model on
$\cup_i \bbT^{(i)}$. Clearly such a chain $X$ is a product chain, $X=\otimes_i X_i$, where each of the individual chain is the OFA-kf model on $\bbT^{(i)}$. The key observation now is that, due to the oriented character of the constraints, the projection on $\cup_i \bbT^{(i)}$ of the OFA-kf model on $\bbT$ coincides with the chain $X$. Hence $T_1(\bbT)\ge t_{\rm mix}$
if $t_{\rm mix}$ denotes the mixing time of the product chain
$X$. According to the previous discussion and with $n=k^{\lfloor
  L/2\rfloor}$ the number of sub-trees $T^{(i)}$ we get
\begin{align*}
T_1(\bbT)\ge t_{\rm mix} &\ge \frac 12 \left(\log n - \log 8\right) \gap(\cL_{\bbT'})^{-1}=
\frac 12\left(\log n - \log 8\right)T_{\rm rel}(\bbT')\\
&\ge \frac 1c L\, T_{\rm rel}(\bbT')
\end{align*}
for some constant $c>0$ where we used translation invariance to conclude  that the spectral gap
$\l_i$ of the chain $X_i$ coincides with $\gap(\cL_{\bbT'})$ for any
$i$, $\bbT'$ denoting a $k$-ary rooted tree of depth $\lceil L/2\rceil$.

\section{Concluding remarks and open problems}
({\bf i}) It is a very interesting problem to determine exactly the critical
exponents for the critical and quasi-critical case and in particular 
to verify whether the \emph{lower bounds} in Theorems \ref{th:j=k} and
\ref{lemmamixing} give the correct growth of the corresponding time
scales as a function of the depth of the tree. 
\smallskip

({\bf ii}) A key ingredient of our analysis
is the fact that the percolation transition on $\bbT^k$ is
\emph{continuous}, i.e. with probability one there is no infinite
cluster of occupied sites at $p=p_c$ and the probability that the
cluster of the root touches more than $n$ levels decays polynomially
in $1/n$. A very challenging open problem is the extension of the
approach described in this work to models with a \emph{discontinuous} (or first-order) phase transition for the corresponding
bootstrap percolation problem. 

The first instance of the above general question goes
as follows. On $\bbT^3$ consider the analog of the
OFA-kf model in which
the constraint at each vertex $x$ requires now at least two of the three children
of $x$ to be
empty. It can be shown \cite{Boot-Peres} that the critical value 
of the corresponding bootstrap percolation problem is $p_c=\frac 89$
and that after infinitely many iterations of the bootstrap map the
root belongs to an infinite cluster of occupied sites with probability
equal to $\frac 34$. In \cite{MT} it was proved that $T_{\rm
  rel}<+\infty$ for all $p<p_c$. At $p_c$ the process is clearly no
longer ergodic, contrary to what happens for the OFA-kf model, because
of the presence of infinite bootstrap percolation clusters which  are
blocked under the dynamics. Finally, for
$p>p_c$, the relaxation time on a finite sub-tree diverges
exponentially fast in the depth of the tree. 

The interesting challenge is to decide the behaviour of e.g. the
relaxation time on a finite $3$-ary rooted tree of finite depth
$L$ at $p_c$. On one hand, the
fact that 
\[
\bbP(\text{the root belongs to a occupied
cluster reaching the leaves}) \sim 3/4,
\] may suggest a scaling of $T_{\rm rel}$ in $L$ much more
rapid than for the critical OFA-kf, even faster than $\text{Poly}(L)$. On the
other hand, the test function given by the indicator of the event
that the root is still occupied after $L$ iterations of the bootstrap
map, which at $p>p_c$ gives an exponential growth in $L$ of $T_{\rm
  rel}$, at $p=p_c$ gives $T_{\rm rel}=\O(L^2)$, exactly as in the
OFA-k model. The same bound $\O(L^2)$ is found using another test function
closer to the one used in section \ref{lwb}. 

Here we conjecture that $T_{\rm rel}$ is still $\text{Poly}(L)$. This conjecture is supported by  numerical
simulations for the unoriented version of the same model \cite{SBT},
namely the model on the unrooted tree with connectivity $k+1=4$ in
which the kinetic constraint requires at least two empty neighbours
(actually these numerical results concern the relaxation time of the
\emph{persistence function} in the quasi-critical regime, a new time scale
which can be bounded from above by $T_{\rm rel}$ \cite{CMRT}). Another
element in favour of our guess is the fact that the phase transition
occurring at $p_c$ has really a mixed first-second order character as
indicated by some non-rigorous work \cite{CLR,Goltsev}. 


\begin{bibdiv}
\begin{biblist}

\bib{AD}{article}{
      author={Aldous, D.},
      author={Diaconis, P.},
       title={The asymmetric one-dimensional constrained {I}sing model:
  rigorous results},
        date={2002},
     journal={J. Statist. Phys.},
      volume={107},
      number={5-6},
       pages={945\ndash 975},
}


\bib{Boot-Peres}{article}{
      author={Balogh, J{\'o}zsef},
      author={Peres, Yuval},
      author={Pete, G{\'a}bor},
       title={Bootstrap percolation on infinite trees and non-amenable groups},
        date={2006},
     journal={Combin. Probab. Comput.},
      volume={15},
      number={5},
       pages={715\ndash 730},
}

\bib{CMRT}{article}{
      author={Cancrini, N.},
      author={Martinelli, F.},
      author={Roberto, C.},
      author={Toninelli, C.},
       title={Kinetically constrained spin models},
        date={2008},
     journal={Probability Theory and Related Fields},
      volume={140},
      number={3-4},
       pages={459\ndash 504},
}

\bib{CMST}{article}{
      author={Cancrini, N.},
      author={Martinelli, F.},
      author={Schonmann, R.},
      author={Toninelli, C.},
       title={Facilitated oriented spin models: some non equilibrium results},
        date={2010},
     journal={J. Stat. Phys.},
      volume={138},
      number={6},
       pages={1109\ndash 1123},
}

\bib {CLR}{article}{
 author={Chalupa,J },
      author={Leath, P.L.},
      author={Reich, G.R.},
        TITLE = {Bootstrap percolation on a Bethe lattice},
   JOURNAL = {J.Phys.C},
  FJOURNAL = {Journal of Physics C},
    VOLUME = {12},
      YEAR = {1979},
     PAGES = {L31-L37},
}

\bib{Ding}{article}{
      author={Ding, Jian},
      author={Lubetzky, Eyal},
      author={Peres, Yuval},
       title={Mixing time of critical {I}sing model on trees is polynomial in
  the height},
        date={2010},
     journal={Comm. Math. Phys.},
      volume={295},
      number={1},
       pages={161\ndash 207},
}

\bib{Evans}{article}{
      author={Evans, William},
      author={Kenyon, Claire},
      author={Peres, Yuval},
      author={Schulman, Leonard~J.},
       title={Broadcasting on trees and the {I}sing model},
        date={2000},
     journal={Ann. Appl. Probab.},
      volume={10},
      number={2},
       pages={410\ndash 433},
}

\bib{FMRT}{article}{
      author={Faggionato, A.},
      author={Martinelli, F.},
      author={Roberto, C.},
      author={Toninelli, C.},
       title={Aging through hierarchical coalescence in the east model},
        date={2012},
     journal={Com. Math. Phys.},
      volume={309},
       pages={459\ndash 495},
}

\bib{FMRT3}{unpublished}{
      author={Faggionato, A.},
      author={Martinelli, F.},
      author={Roberto, C.},
      author={Toninelli, C.},
       title={The {E}ast model: recent results and new progresses},
        date={2012},
        note={preprint available at arXiv:1205.1607},
}

\bib{GarrahanSollichToninelli}{incollection}{
      author={Garrahan, J.},
      author={Sollich, P.},
      author={Toninelli, C.},
       title={Dynamical heterogeneities and kinetically constrained models},
        date={2011},
   booktitle={Dynamical heterogeneities in glasses, colloids and granular media
  and jamming transitions},
   publisher={Oxford press,Ed: L. Berthier,G. Biroli J-P. Bouchaud,
  L.Cipelletti , W.van Saarloos},
     address={Oxford},
       pages={341\ndash 369},
}

\bib{Goltsev}{article}{
      author={Goltsev, A.~V.},
      author={Dorogovtsev, S.~N.},
      author={Mendes, J. F.~F.},
       title={$k$-core (bootstrap) percolation on complex networks: Critical
  phenomena and nonlocal effects},
        date={2006},
     journal={Phys. Rev. E},
      volume={73},
       pages={056101},
}

\bib{JACKLE}{article}{
      author={J\"{a}ckle, J.},
      author={Eisinger, S.},
       title={A hierarchically constrained kinetic ising model},
        date={1991},
     journal={Z. Phys. B: Condens. Matter},
      volume={84},
      number={1},
       pages={115\ndash 124},
}
\bib{LK}{article}{
    author = {Kordzakhia, G.}
     author={Lalley, S.P.},
     TITLE = {Ergodicity and mixing properties of the northeast model},
   JOURNAL = {J. Appl. Probab.},
  FJOURNAL = {Journal of Applied Probability},
    VOLUME = {43},
      YEAR = {2006},
    NUMBER = {3},
     PAGES = {782-792.},
}

\bib{Liggett}{book}{
      author={Liggett, T.M.},
       title={Interacting particle systems},
      series={Grundlehren der Mathematischen Wissenschaften [Fundamental
  Principles of Mathematical Sciences]},
   publisher={Springer-Verlag},
     address={New York},
        date={1985},
      volume={276},
}

\bib{Peresbook}{book}{
    author = {Levin, David A.},
    author = {Peres, Yuval},
    author = {Wilmer, Elizabeth},
    title =  {Markov Chains and Mixing Times},
    publisher = {American Mathematical Society},
    date = {2008},
    pages = {371},
}

\bib{MT}{unpublished}{
      author={Martinelli, F.},
      author={Toninelli, C.},
       title={Kinetically constrained spin models on trees},
        date={2012},
        note={Annals of Applied Probability, in press},
}

\bib{Ritort-Sollich}{article}{
      author={Ritort, F.},
      author={Sollich, P.},
       title={Glassy dynamics of kinetically constrained models},
        date={2003},
     journal={Adv Phys},
      volume={52},
      number={4},
       pages={219\ndash 342},
}

\bib{Gine}{incollection}{
      author={Saloff-Coste, L.},
       title={Lectures on finite {M}arkov chains},
        date={1997},
   booktitle={Lectures on probability theory and statistics ({S}aint-{F}lour,
  1996)},
      series={Lecture Notes in Math.},
      volume={1665},
   publisher={Springer},
     address={Berlin},
       pages={301\ndash 413},
}
\bib{SBT}{article}{
author={Sellitto, S.},
      author={Biroli, G.},
author={Toninelli, C.},
     TITLE = {Facilitated spin models on Bethe lattice: bootstrap percolation, mode coupling transition and glassy dynamics},
   JOURNAL = {Europhysics Lett.},
  FJOURNAL = {Europhysics Letters},
    VOLUME = {69},
      YEAR = {2005},
     PAGES = {496-512.},
}
\end{biblist}
\end{bibdiv}

\end{document}